\long\def\comment#1\endcomment{}
\theoremstyle{plain}
\newtheorem{theorem}{{\sc Theorem}}[section]
\newtheorem{lemma}[theorem]{\sc Lemma}
\newtheorem{prop}[theorem]{\sc Proposition}
\newtheorem{coroll}[theorem]{\sc Corollary}
\theoremstyle{plain}
\newtheorem{defn}[theorem]{\sc Definition}
\theoremstyle{exercise}
\newtheorem{remark}[theorem]{\sc Remark}
\makeatletter \@addtoreset{equation}{section} \makeatother
\def\eqref#1{\thetag{\ref{#1}}}
\let\latexref=\ref
\def\ref#1{{\normalfont{\latexref{#1}}}}
\newcommand{\ldot}{{\:\raisebox{2,3pt}{\text{\circle*{1.5}}}}}
\def\dlim_#1{{\displaystyle\lim_{#1}}^\hdot}
\newcommand{\id}{\operatorname{\rm id}}
\newcommand{\Ob}{\mathrm{Ob}}
\newcommand{\opp}{\mathrm{opp}}
\newcommand{\Hom}{\mathrm{Hom}}
\newcommand{\Cat}{{\mathscr{C}at}}
\newcommand{\Top}{{\mathscr{T}op}}
\renewcommand{\top}{\mathrm{top}}
\newcommand{\SSets}{{\mathscr{SS}ets}}
\newcommand{\str}{\mathrm{inj}}
\title{\sc{On Evrard's homotopy fibrant replacement \\of a functor}}
\author{\sc{Boris Shoikhet}}
\date{}
\begin{document}\maketitle
{\footnotesize
\begin{center}{\parbox{4,5in}{{\sc Abstract.}
We provide a more economical refined version of Evrard's categorical cocylinder factorization of a functor [Ev1,2]. We show that any functor between small categories can be factored into a homotopy equivalence followed by a (co)fibred functor which satisfies 
the (dual) assumption of Quillen's Theorem B.
}}
\end{center}
}

\section*{\sc Introduction}
The problem treated here is how to replace a functor $f\colon \mathscr{C}\to\mathscr{D}$ between small categories with a ``homotopy fibration of categories''. By latter we mean a functor fulfilling the assumption of Corollary to Quillen Theorem B [Q, Section 1], see Proposition \ref{propm} for several equivalent formulations.

Such a replacement of a functor $f\colon\mathscr{C}\to\mathscr{D}$ between small categories by a homotopy fibration is a commutative diagram
$$
\xymatrix{\mathscr{C}\ar[r]^{i\ \ }\ar[rd]_{f}&\mathscr{H}(f)\ar[d]^{f_h}\\
&\mathscr{D}
}
$$
where $i$ is a weak equivalence of categories, and $f_h$ is a homotopy fibration.

There are apparently many different explicit constructions for that. We consider here a construction of Marcel Evrard [Ev1,Ev2], which mimics the cocylinder factorization of a continuous map of topological spaces. His construction is based on a specific categorical model $\Lambda\mathscr{D}$ of the topological cocylinder $Y^{[0,1]}$. We shall review Evrard's construction and study in parallel a more economical version thereof. Our proof of its homotopical properties is completely new.
We use on one side Rezk's notion of a {\it sharp map} of simplicial sets [Re], which we rename {\it simplicial $h$-fibration}, and on the other side Grothendieck-Maltsiniotis' theory of {\it aspherical functors}, cf. [Ma].

Then the category $\mathscr{H}(f)$, for $f\colon\mathscr{C}\to\mathscr{D}$, is defined as the following pullback diagram
$$
\xymatrix{
\mathscr{H}(f)\ar[r]\ar[d]&\Lambda\mathscr{D}\ar[d]^{p_0}\\
\mathscr{C}\ar[r]^{f}&\mathscr{D}
}
$$
where the functor $p_1$ gives the functor $f_h\colon \mathscr{H}(f)\to\mathscr{D}$, the ``fibrant replacement'' of $f$, and $\Lambda(\mathscr{D})$ is the categorical analogue of the topological path space, see Section \ref{sectionpath} for its definition. Here $p_0, p_1\colon \Lambda(\mathscr{D})\to\mathscr{D}$ are the functors of the left and right ends of a path.

Recall the classical topological construction mentioned above. Let $f\colon X\to Y$ be a continuous map.
Consider the free path space $Y^{[0,1]}$, and the two projections $p_0,p_1: Y^{[0,1]}\to Y$, assigning to a path its left and right ends. Define $X_h$ as the following pullback diagram
$$
\xymatrix{
X_h\ar[r]\ar[d]&Y^{[0,1]}\ar[d]^{p_0}\\
X\ar[r]^{f}&Y
}
$$
where the projection $p_1$ defines a map $f_h\colon X_h\to Y$ known to be a Serre fibration.

The paper contains three Sections.

Section 1 contains categorical preliminaries, and the material here is fairly standard. The exception is Section \ref{section1.4} providing a characterization of the nerve of a homotopy fibrant functor via  Rezk's sharp maps. 
The more classical topics recalled here include Quillen Theorem B and the Corollary to it, and the categorical Grothendieck construction. 

In Section 2 we recall Evrard's constructions of the free path category $\Lambda(\mathscr{D})$ and of the functor $f_h\colon\mathscr{H}(f)\to\mathscr{D}$, and discuss different weak homotopy equivalence relations on functors between small categories. 
We prove that $\Lambda(\mathscr{D})$ is weakly homotopy equivalent to $\mathscr{D}$, and that the corresponding map $\mathscr{C}\to\mathscr{H}(f)$ is a week homotopy equivalent. Then we formulate the main result Theorem \ref{theoreme}.

Section 3 is devoted to a proof of the remaining part of Theorem \ref{theoreme}, that $f_h\colon\mathscr{H}(f)\to\mathscr{D}$ is a homotopy fibration. We study simultaneously two versions: Evrard's original construction as well as our refined version thereof.

\subsection*{}
\subsubsection*{\sc Acknowledgements}
The author is thankful to Bernhard Keller and Georges Maltsiniotis for sending to him, several years ago, a scanned copy of the M.Evrard's Thesis [Ev1].

The author is wholeheartedly thankful to the anonymous referee, whose detailed report on the submitted version of the paper and whose many suggestions made it possible to strengthen the main result of the paper and to correct many inaccuracies. 
Among the referee's suggestions, we would mention Proposition \ref{propm} as well as its proof, and his remark that when having replaced cospans by spans in the construction of $\mathscr{H}(f)$, the map $f_h$ becomes a homotopy fibration. 
Another important suggestion concerned a more careful treatment of three different concepts of homotopy relations on functors in Section \ref{diffhom}, which helped to make the proofs in Section 2 fairly rigorous.

The work was partially supported by the FWO research grant ``Kredieten aan navorsers'' project nr. 6525.

\section{\sc Categorical preliminaries}\label{section1}
In Sections \ref{section1.1}-\ref{section1.4} we recall basic facts on homotopy theory of small categories, including Quillen's Theorem B and its interpretation in terms of Rezk's sharp maps of simplicial sets, followed by the Grothendieck construction in Section \ref{gc}.
\subsection{\sc The basic principles of the homotopy theory of categories}\label{section1.1}
Here we recall some elementary facts about the homotopy theory of categories, following the first few pages of [Q2, Section 1].

The theory starts with the {\it classifying space} functor $B\colon \Cat\to \Top$ from small categories to topological spaces, introduced by G.Segal in [Seg1].

Firstly one defines the {\it nerve} $N\mathscr{C}$ of a small category $\mathscr{C}$, which is the simplicial set whose $n$-simplices are chains of $n$ composable morphisms:
\begin{equation}\label{eqn1.1}
X_0\xrightarrow{f_1}X_1\xrightarrow{f_2}X_2\cdots X_{n-1}\xrightarrow{f_n}X_n
\end{equation}
The $i$-th face map $\delta_i\colon N\mathscr{C}_n\to N\mathscr{C}_{n-1}$ is obtained by deleting of $X_i$ in \eqref{eqn1.1}, and if $i\ne 0,n$, by replacing the maps $f_{i}$ and $f_{i+1}$ by their composition.
The $i$-th degeneracy map $\varepsilon_i\colon N\mathscr{C}_n\to N\mathscr{C}_{n+1}$ is obtained by inserting of another copy of $X_i$ at the $i$-th position, and inserting the identity map between the two copies $X_i$.

It is a simplicial set, functorially depending on $\mathscr{C}$. The geometric realization of $N\mathscr{C}$ is a topological space, called {\it the classifying space} of $\mathscr{C}$. It is denoted by $B\mathscr{C}$:
\begin{equation}\label{eqn1.2}
B\mathscr{C}=|N\mathscr{C}|
\end{equation}

Any functor $f\colon\mathscr{C}_1\to\mathscr{C}_2$ defines a map of topological spaces $B(f)\colon B\mathscr{C}_1\to B\mathscr{C}_2$.
For three categories $\mathscr{C}_1,\mathscr{C}_2,\mathscr{C}_3$ and functors $f\colon \mathscr{C}_1\to\mathscr{C}_2, g\colon \mathscr{C}_2\to\mathscr{C}_3$ one has:
\begin{equation}\label{eqn1.2bis}
B(g\circ f)=B(g)\circ B(f)
\end{equation}
as the nerve enjoys this property, and the geometrical realization is a functor.

\begin{defn}\label{defhq}{\rm
\begin{itemize}
\item[(i)]
Two functors $f,g\colon \mathscr{C}_1\to\mathscr{C}_2$ are said to be {\it homotopic} (in the sense of Quillen) if the corresponding maps
$Bf,Bg\colon B\mathscr{C}_1\to B\mathscr{C}_2$ are homotopic maps of topological spaces;
\item[(ii)] A functor $f\colon \mathscr{C}_1\to \mathscr{C}_2$ is called a {\it weak homotopy equivalence} if the corresponding map $Bf\colon B\mathscr{C}_1\to B\mathscr{C}_2$ is a homotopy equivalence of topological spaces;
\item[(iii)] A functor $f\colon \mathscr{C}_1\to\mathscr{C}_2$ is called a {\it homotopy equivalence} if there is a functor $g\colon \mathscr{C}_2\to\mathscr{C}_1$ such that $Bf\colon B\mathscr{C}_1\to B\mathscr{C}_2$ and $Bg\colon B\mathscr{C}_2\to B\mathscr{C}_1$ are homotopy inverse maps of topological spaces;
\item[(iv)] Two small categories $\mathscr{C}_1$ and $\mathscr{C}_2$ are called {\it weakly homotopy equivalent} (respectively, {\it homotopy equivalent}) if there is a functor $f\colon \mathscr{C}_1\to\mathscr{C}_2$ which is a weak homotopy equivalence (correspondingly, a homotopy equivalence).
\end{itemize}
}
\end{defn}

The data consisting of two functors $f,g\colon \mathscr{C}_1\to\mathscr{C}_2$ and a natural transformation $h\colon f\to g$ can be interpreted in the following way. Denote by $\mathscr{I}$ the category with two objects $0$ and $1$ and the only non-identity morphism $i\colon 0\to 1$.
Then the above data is the same that a single functor $F_{f,g,h}\colon \mathscr{C}_1\times \mathscr{I}\to \mathscr{C}_2$.
It results in a map:
\begin{equation}
B\mathscr{C}_1\times I\sim B\mathscr{C}_1\times B\mathscr{I}\to B(\mathscr{C}_1\times\mathscr{I})\to B\mathscr{C}_2
\end{equation}
where $I$ is the closed interval, $I=B\mathscr{I}$.

We have:

\begin{prop}\label{propbasic}
\begin{itemize}
\item[(i)]
Let $\mathscr{C}_1,\mathscr{C}_2$ be two small categories, $f,g\colon\mathscr{C}_1\to\mathscr{C}_2$ two functors, and $h\colon f\to g$ a map of functors. Then $h$ defines a homotopy between $B(f),B(g)\colon B\mathscr{C}_1\to B\mathscr{C}_2$;
\item[(ii)]
let $\mathscr{C}$ and $\mathscr{D}$ be small categories, and $$F\colon \mathscr{C}\rightleftarrows\mathscr{D}: G$$ a pair of
adjoint functors. Then $B\mathscr{C}$ and $B\mathscr{D}$ are homotopy equivalent topological spaces;
\item[(iii)]
suppose a small category $\mathscr{C}$ has an initial (resp., a final object). Then $B\mathscr{C}$ is contractible.
\end{itemize}
\end{prop}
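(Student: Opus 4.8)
The plan is to prove part (i) first — it is the technical heart of the proposition — and then to obtain (ii) and (iii) from it by purely formal arguments.

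For (i), I would exploit the reformulation already recorded above: a natural transformation $h\colon f\to g$ is the same datum as a single functor $F_{f,g,h}\colon\mathscr{C}_1\times\mathscr{I}\to\mathscr{C}_2$ whose restrictions to $\mathscr{C}_1\times\{0\}$ and $\mathscr{C}_1\times\{1\}$ are $f$ and $g$. Applying the nerve and using that $N$ carries products of categories to products of simplicial sets, $N(\mathscr{C}_1\times\mathscr{I})\cong N\mathscr{C}_1\times N\mathscr{I}$ — an $n$-simplex on either side being a functor $[n]\to\mathscr{C}_1\times\mathscr{I}$, equivalently a pair of functors — I would then pass to geometric realizations. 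The key input is that geometric realization commutes with finite products, so that $B(\mathscr{C}_1\times\mathscr{I})\cong B\mathscr{C}_1\times B\mathscr{I}=B\mathscr{C}_1\times I$. The realization of $F_{f,g,h}$ then furnishes a map $H\colon B\mathscr{C}_1\times I\to B\mathscr{C}_2$ whose restrictions to the two ends are $B(f)$ and $B(g)$, which is exactly a homotopy between them. The one place where genuine care is needed, rather than mere formalities, is this product formula for realization: it holds in the category of compactly generated (weak Hausdorff) spaces, in which the product $B\mathscr{C}_1\times I$ is to be formed. This is the main obstacle, and everything else reduces to it.

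For (ii), given the adjunction $F\dashv G$ I would use the unit $\eta\colon\Id_{\mathscr{C}}\to G\circ F$ and the counit $\varepsilon\colon F\circ G\to\Id_{\mathscr{D}}$, both of which are natural transformations. By part (i) each of them induces a homotopy, so $B(G\circ F)\simeq B(\Id_{\mathscr{C}})=\id_{B\mathscr{C}}$ and $B(F\circ G)\simeq\id_{B\mathscr{D}}$. Using functoriality \eqref{eqn1.2bis}, $B(G\circ F)=B(G)\circ B(F)$ and $B(F\circ G)=B(F)\circ B(G)$, so $B(F)$ and $B(G)$ are mutually inverse homotopy equivalences, whence $B\mathscr{C}$ and $B\mathscr{D}$ are homotopy equivalent.

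For (iii), suppose $\mathscr{C}$ has an initial object $e$, and let $c_e\colon\mathscr{C}\to\mathscr{C}$ be the constant functor at $e$. The unique morphisms $e\to X$ assemble into a natural transformation $c_e\to\Id_{\mathscr{C}}$, so by (i) the constant map $B(c_e)$ is homotopic to $\id_{B\mathscr{C}}$, proving $B\mathscr{C}$ contractible; the case of a final object is dual, the natural transformation now being $\Id_{\mathscr{C}}\to c_e$. Alternatively, I note that (iii) is a special case of (ii): the functor $\ppt\to\mathscr{C}$ selecting the initial (resp.\ final) object is left (resp.\ right) adjoint to the canonical functor $\mathscr{C}\to\ppt$, so by (ii) $B\mathscr{C}\simeq B\ppt$, a point.
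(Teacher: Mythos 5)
Your proposal is correct and follows essentially the same route as the paper: part (i) via the functor $F_{f,g,h}\colon\mathscr{C}_1\times\mathscr{I}\to\mathscr{C}_2$ and the identification of $B(\mathscr{C}_1\times\mathscr{I})$ with $B\mathscr{C}_1\times I$, part (ii) from (i) applied to the unit and counit together with the functoriality \eqref{eqn1.2bis}, and part (iii) via the adjoint to the projection $\mathscr{C}\to\ppt$ (your direct argument with the constant functor at the initial object is an equally valid variant). The only difference is that you make explicit the product-compatibility of geometric realization, which the paper leaves implicit; note that here it even holds in plain topological spaces, since the factor $B\mathscr{I}=I$ is a finite complex, so the compactly generated caveat is not strictly needed.
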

\begin{proof}
We have just shown (i). The claim (ii) follows immediately from (i) and from \eqref{eqn1.2bis}, as there are adjunction maps of functors $F\circ G\to \id$ and $G\circ F\to \id$. For (iii), if $\mathscr{C}$ has an initial (resp., a final) object, the projection functor of $\mathscr{C}$ to the category $*$ with a single object and with the only (identity) morphism, admits a left (resp. a right) adjoint.
\end{proof}

For further references, recall here the following
\begin{prop}\label{geomreal}
The geometric realization functor $|\ |\colon \SSets\to\Top$ commutes with arbitrary colimits and with finite limits.
\end{prop}
\begin{proof}
For the first statement, the geometric realization is left adjoint to the singular complex functor, see e.g. [GJ, Prop.2.2]. For the second statement, see [May, Theorem 14.3].
\end{proof}

\subsection{\sc Quasi-fibrations and $h$-fibrations}\label{qfibr}
Here we recall a weaker version of the concepts of a Serre fibration in the category of topological spaces, and of a Kan fibration in the category of simplicial sets. This material will be used later in Section \ref{section1.4} to give a homotopy theoretical description of the map $Nf\colon N\mathscr{C}_1\to N\mathscr{C}_2$, where a functor $f\colon \mathscr{C}_1\to \mathscr{C}_2$ satisfies the assumption of Quillen Theorem B (correspondingly, the assumption of the Corollary to Quillen Theorem B), see Proposition \ref{propm} below. 

The following definition (for the topological case) was introduced in [DT].

Let $f\colon X\to Y$ be a map of topological spaces, $y\in Y$. The fiber $f^{-1}(y)$ is defined by the following pullback diagram
$$
\xymatrix{
f^{-1}(y)\ar[r]\ar[d]&X\ar[d]^{f}\\
\{y\}\ar[r]&Y
}
$$
The homotopy fiber $f^{-1}_h(y)$ is the homotopy limit of the same diagram. There is a natural map of topological spaces
$$
i_y\colon f^{-1}(y)\to f^{-1}_h(y)
$$

\begin{defn}\label{defqf}
{\rm
A map of topological spaces $f\colon X\to Y$ is called a {\it quasi-fibration} if for any point $y\in Y$ the map $i_y$ is a weak homotopy equivalence.
}
\end{defn}
It was shown in [DT] that a topological quasi-fibration gives rise to a long exact sequence in homotopy groups, in the same way as a Serre fibration does. 

It is known that the pullback of a topological quasi-fibration $f\colon X\to Y$ along a map $Y^\prime\to Y$ is {\it not}, in general, a quasi-fibration. In the same time, quasi-fibrations ``patch'' in the following sense: if for a map of topological spaces $f\colon X\to Y$ each point $y\in Y$ has a neighbourhood $U_y$ such that the restriction of $f$ to $U_y$ is a quasi-fibration, then $f$ also is a quasi-fibration. The reader is referred to [DT] and to the letter of Goodwillie [G] for further detail and proofs. 

For any closed model category $\mathscr{M}$ [Q1,Hi], Rezk [R] defines a class of {\it sharp maps}. In the category of topological spaces with Quillen's model structure, this class is contained in the class of {\it quasi-fibrations} of Dold-Thom, but has the advantage over the latter of being closed under base-change.

Let us recall the definition.
\begin{defn}\label{sharpdef}{\rm
A map $f\colon X\to Y$ in a closed model category $\mathscr{M}$ is called {\it sharp} if for each diagram
$$
\xymatrix{
A\ar[r]^i\ar[d]&A^\prime\ar[r]\ar[d]&X\ar[d]^f\\
B\ar[r]^j&B^\prime\ar[r]&Y
}
$$
in which each square is a pullback square, $j$ is a weak equivalence, the map $i$ is also a weak equivalence.  
}
\end{defn}
It follows immediately that the class of sharp maps is closed under base change. The closed model category is right proper [Hi, Sect. 13.1] if and only if each fibration is sharp [R, Prop.2.2].

In this paper, we call the sharp maps of Rezk {\it homotopy fibrations}, or {\it $h$-fibrations}.

Both categories $\Top$ of topological spaces and $\SSets$ of simplicial sets are right (and left) proper [Hi, Theorems 13.1.11, 13.1.13]. Therefore, in both categories the fibrations are $h$-fibrations, and thus the $h$-fibrations are considered as ``homotopy fibrations''. 

For the category $\Top$ any $h$-fibration is clearly a quasi-fibration. Thus, $h$-fibrations provide a more rigid concept than quasi-fibrations, and strictly more rigid, as the class of quasi-fibrations is not closed under base-change. It is likely that the topological $h$-fibrations are precisely those quasi-fibrations which remain quasi-fibrations under base-change, i.e. those which Goodwillie [G] calls {\it universal} quasi-fibrations.

For the category $\Top$ there is not known any checkable criterion for a map to an $h$-fibration. 

For the case of category of simplicial sets $\SSets$
Rezk proves the following result [R, Theorem 4.1 and Remark 4.2]:
\begin{prop}\label{proprezk}
The following statements are equivalent in $\SSets$:
\begin{itemize}
\item[(1)] $f\colon X\to Y$ is an $h$-fibration,
\item[(2)] for each map $g\colon \Delta[n]\to Y$ the pullback square 
$$
\xymatrix{
\Delta[n]\times_Y X\ar[r]\ar[d]&X\ar[d]^{f}\\
\Delta[n]\ar[r]^{g}&Y
}
$$
is homotopy cartesian,
\item[(3)] for each diagram of pullback squares of the form
$$
\xymatrix{
P\ar[r]^{h}\ar[d]&P^\prime\ar[r]\ar[d]&X\ar[d]^{f}\\
\Delta[m]\ar[r]^{\delta}&\Delta[n]\ar[r]&Y
}
$$
and arbitrary map $\delta$ of the standard simplices, the map $h$ is a weak equivalence.
\end{itemize}
\end{prop}
Note that the condition (2) is reminiscent of the definition of quasi-fibration of topological spaces.

In fact, one has
\begin{prop}\label{boryaqf}
Let $f\colon X\to Y$ be an $h$-fibration in $\SSets$. Then the geometric realization $|f|\colon |X|\to |Y|$ is a topological quasi-fibration.
\end{prop}
\begin{proof}
We need to prove that for any point $y\in |Y|$, the diagram 
\begin{equation}\label{bqf}
\xymatrix{
|f|^{-1}(y)\ar[r]\ar[d]&|X|\ar[d]^{|f|}\\
\mathrm{pt}\ar[r]^{y}&|Y|
}
\end{equation}
is homotopy cartesian. 

Assume at first that $y$ is a ``simplicial'' point in $|Y|$, that is, it comes from a 0-simplex in $Y$. Then the fact that \eqref{bqf} is homotopy cartesian follows from Proposition \ref{proprezk}(2) for $n=0$, and from Proposition \ref{geomreal}.
For general, not necessarily simplicial, point $y\in |Y|$, we find a simplicial point $y_0\in |Y|$ and a path connecting $y$ with $y_0$ (such $y_0$ always exists). This path defines a homotopy between $|f|^{-1}(y_0)$ and $|f|^{-1}(y)$, and the homotopy limit does not change when a map in the diagram is replaced by a homotopic one.
\end{proof}

Thus, we have the following implications:
\begin{equation}
\left(\mbox{simplicial $h$-fibrations}\right)\Rightarrow\left(\mbox{topological quasi-fibrations}\right)\Leftarrow
\left(\mbox{topological $h$-fibrations}\right)
\end{equation}
In the same time, the geometrical realization of a simplicial $h$-fibration may be {\it not} a topological $h$-fibration (at least, we do not see any argument which proves the contrary).

\subsection{\sc Pre(co-)fibred categories, and Quillen Theorem B}\label{sectionq}
Here we recall the definitions of {\it (pre-)fibred} and of {\it (pre-)cofibred} categories (due to Grothendieck [SGA1, Expos\'{e} VI]), and formulate Quillen's Theorem B and his Corollary to Theorem B [Q2, Section 1]. In the next Subsection, we discuss the geometrical counter-part of the Corollary to Quillen Theorem B, and link it with the Rezk's sharp maps in $\SSets$ in Proposition \ref{propm}.

Let $f\colon\mathscr{C}\to\mathscr{D}$ be a functor, $Y$ a fixed object of $\mathscr{D}$.

Denote by $Y\setminus f$ the category whose objects are pairs $(X,v)$ where $X\in\mathscr{C}$, and $v\colon Y\to f(X)$ is a morphism in $\mathscr{D}$. A morphism $(X,v)\to (X^\prime, v^\prime)$ is a morphism $w\colon X\to X^\prime$ in $\mathscr{C}$ such that $f(w)\circ v=v^\prime$.

As well, denote by $f\setminus Y$ the category whose objects are pairs $(X,v)$ where $X\in\mathscr{C}$, and $v\colon f(X)\to Y$ is a morphism in $\mathscr{D}$. A morphism $(X,v)\to (X^\prime,v^\prime)$ is a morphism $w\colon X\to X^\prime$ in $\mathscr{C}$ such that $v^\prime\circ f(w)=v$.

The Quillen's Theorem A [Q2, Section 1] says that if, for a functor $f\colon\mathscr{C}\to\mathscr{D}$ the category $Y\setminus f$ (correspondingly, $f\setminus Y$) is contractible for each $Y\in\mathscr{D}$ then $f$ is a weak homotopy equivalence (that is, $f_\top: B\mathscr{C}\to B\mathscr{D}$ is a homotopy equivalence).

Along with the comma categories $Y\setminus f$ and $f\setminus Y$, one considers the ``set-theoretical fiber'' $f^{-1}Y$. It is the subcategory of $\mathscr{C}$ of objects $X$ such that $f(X)=Y$ and of morphisms $w\colon X\to X^\prime$ such that $f(w)=\id_Y$.

The comma categories $Y\setminus f$ and $f\setminus Y$ are advantageous, comparably with $f^{-1}(Y)$, by their functorial behaviour.
Let $v\colon Y\to Y^\prime$ be a morphism in $\mathscr{D}$. Then one has the natural functors
$$
[v^*]\colon Y^\prime\setminus f\to Y\setminus f\text{  and  }[v_*]\colon f\setminus Y\to f\setminus Y^\prime
$$
However, for existence of base-change functors $v^*\colon f^{-1}(Y^\prime)\to f^{-1}(Y)$ and $v_*\colon f^{-1}(Y)\to f^{-1}(Y^\prime)$ one should assume that $f$ is pre-(co)fibred, see below.

We use the notations $Y\setminus \mathscr{D}$ and $\mathscr{D}\setminus Y$, for $Y\in \mathscr{D}$. The category $Y\setminus \mathscr{D}$ has as its objects the pairs $(Y^\prime,v)$ where $v\colon Y\to Y^\prime$, and a morphism
$t\colon (Y^\prime,v)\to (Y^{\prime\prime},u)$ is a morphism $t\colon Y^\prime\to Y^{\prime\prime}$ such that $t\circ v=u$, and the category $\mathscr{D}\setminus Y$ is defined accordingly.

The category $Y\setminus\mathscr{D}$ has $Y\xrightarrow{\id}Y$ as its initial object, and the category $\mathscr{D}\setminus Y$ has $Y\xrightarrow{\id}Y$ as its final object. Therefore, both categories $Y\setminus\mathscr{D}$ and $\mathscr{D}\setminus Y$ are contractible, by Proposition \ref{propbasic}(iii).

\begin{theorem}[Quillen Theorem B]\label{qtheoremb}
Let $f\colon\mathscr{C}\to\mathscr{D}$ be a functor such that for any arrow $v\colon Y\to Y^\prime$ in $\mathscr{D}$ the induced functor $[v^*]\colon Y^\prime\setminus f\to Y\setminus f$ is a weak homotopy equivalence. Then for any $Y\in\mathscr{D}$ the cartesian square of categories
\begin{equation}
\xymatrix{
Y\setminus f\ar[r]^j\ar[d]_{[f]}&\mathscr{C}\ar[d]^f\\
Y\setminus \mathscr{D}\ar[r]^{[j]}&\mathscr{D}
}
\end{equation}
is homotopy cartesian,
where
\begin{equation}
j(X,v)=X,\ \ [f](X,v)=(f(X),v),\ \ [j](Y^\prime,v)=Y^\prime
\end{equation}
Dually, assume that for any $v\colon Y\to Y^\prime$ the induced functor $[v_*]$ is a weak homotopy equivalence. Then the cartesian diagram
\begin{equation}
\xymatrix{
f\setminus Y\ar[r]^{j^\prime}\ar[d]_{(f)}&\mathscr{C}\ar[d]^{f}\\
\mathscr{D}\setminus Y\ar[r]^{[j^\prime]}&\mathscr{D}
}
\end{equation}
is homotopy cartesian.
\end{theorem}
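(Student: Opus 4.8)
The plan is to prove, for every object $Y\in\mathscr{D}$, that $B(Y\setminus f)$ is the homotopy fibre of $Bf\colon B\mathscr{C}\to B\mathscr{D}$ over the vertex $Y$, and to deduce the homotopy cartesian assertion from this. The deduction is immediate and applies to both squares: the categories $Y\setminus\mathscr{D}$ and $\mathscr{D}\setminus Y$ have, respectively, the initial object $\id_Y$ and the final object $\id_Y$, hence are contractible by Proposition \ref{propbasic}(iii), and they are sent by $B[j]$, resp.\ $B[j^\prime]$, to the vertex $Y$; therefore each homotopy pullback in question is the corresponding homotopy fibre of $Bf$ over $Y$, and the comparison functor out of the genuine pullback realizes to the canonical map into that homotopy fibre. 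I would establish the statement first in the ``dual'' form, for which the Grothendieck construction recalled in Section 1 applies directly, and then obtain the first form by applying the result to $f^{\op}\colon\mathscr{C}^{\op}\to\mathscr{D}^{\op}$, using that $B\mathscr{C}^{\op}=B\mathscr{C}$, that $f^{\op}\setminus Y=(Y\setminus f)^{\op}$ has the same classifying space as $Y\setminus f$, and that the functor $[v_*]$ for $f^{\op}$ is $[v^*]$ for $f$.

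For the dual form I would organize the comma categories into a functor $H\colon\mathscr{D}\to\Cat$, $Y\mapsto f\setminus Y$, with $H(v)=[v_*]$, and form its Grothendieck construction $\int_{\mathscr{D}}H$, whose objects are triples $(X,Y,a\colon f(X)\to Y)$. The projection $\Phi\colon\int_{\mathscr{D}}H\to\mathscr{C}$, $(X,Y,a)\mapsto X$, has the section $\Psi\colon X\mapsto(X,f(X),\id_{f(X)})$; the morphisms $\theta_{(X,Y,a)}=(\id_X,a)\colon(X,f(X),\id)\to(X,Y,a)$ assemble into a natural transformation $\Psi\Phi\Rightarrow\id$, so by Proposition \ref{propbasic}(i) the maps $B\Phi$ and $B\Psi$ are mutually inverse homotopy equivalences and $B\int_{\mathscr{D}}H\simeq B\mathscr{C}$. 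Thomason's theorem identifies $B\int_{\mathscr{D}}H$ with $\hocolim_{\mathscr{D}}B(f\setminus Y)$, and under these identifications the projection onto the base, $\hocolim_{\mathscr{D}}B(f\setminus Y)\to\hocolim_{\mathscr{D}}(*)=B\mathscr{D}$, corresponds to $Bf$, since $\Psi$ sends $X$ to an object lying over $f(X)$.

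The crux is the fibration property of this homotopy colimit. The hypothesis of the theorem is exactly that every structure map $B[v_*]=BH(v)$ is a homotopy equivalence, i.e.\ that $H$ is homotopy locally constant. The lemma I would invoke is the standard bisimplicial statement, the Bousfield--Friedlander theorem, which is the technical core of Quillen's original proof: if a functor into spaces sends every morphism to a weak equivalence, then the projection of its homotopy colimit onto the base $B\mathscr{D}$ has, over the vertex $Y$, homotopy fibre the value $B(f\setminus Y)$. Together with the previous paragraph this gives $B(f\setminus Y)\simeq\operatorname{hofib}_Y(Bf)$, whence the dual square is homotopy cartesian, and applying the same argument to $f^{\op}$ gives $B(Y\setminus f)\simeq\operatorname{hofib}_Y(Bf)$, whence the first square is homotopy cartesian as well. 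The main obstacle is precisely this lemma: one must verify the $\pi_*$--Kan (Reedy fibrancy) conditions for the bisimplicial set $(p,q)\mapsto\coprod_{Y_0\to\cdots\to Y_p}N_q(f\setminus Y_0)$ computing the homotopy colimit, so that its diagonal realizes the genuine homotopy fibre rather than the naive fibre; this is the one and only place where the local constancy of $H$, i.e.\ the hypothesis on $[v_*]$, is consumed.
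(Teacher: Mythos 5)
You should know at the outset that the paper itself contains no proof of this statement: it is Quillen's Theorem B, recalled verbatim as background from [Q, Section 1] (the statement is followed by a bare QED box, and Sections 3--4 then use it as a black box). So the only meaningful comparison is with Quillen's original argument, not with anything in the paper. Measured against that, your outline is correct and is essentially a modern repackaging of Quillen's own proof. Your functor $H\colon\mathscr{D}\to\Cat$, $Y\mapsto f\setminus Y$, $v\mapsto[v_*]$, together with its Grothendieck construction $\mathscr{D}\int H$ and Thomason's identification $B\bigl(\mathscr{D}\int H\bigr)\simeq\hocolim_{\mathscr{D}}B(f\setminus Y)$, is exactly the bisimplicial set that Quillen manipulates by hand (Thomason's theorem postdates [Q], so he never names it); your retraction $\Phi,\Psi$ with $\Phi\Psi=\id$, $\Psi\Phi\Rightarrow\id$ and $\pi\circ\Psi=f$ is the same homotopy equivalence onto $B\mathscr{C}$ that he establishes; and the reduction of the first square to the dual one through $f^{\op}$, using $B\mathscr{C}^{\op}\cong B\mathscr{C}$, $f^{\op}\setminus Y=(Y\setminus f)^{\op}$ and the translation of $[v^*]$ into $[v_*]$, is a legitimate shortcut (Quillen simply treats the two cases symmetrically). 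The passage from the homotopy-fibre identification to the homotopy cartesian squares, via contractibility of $Y\setminus\mathscr{D}$ and $\mathscr{D}\setminus Y$ (Proposition \ref{propbasic}(iii)), is also fine, granting the routine coherence check that the null-homotopy obtained by contracting $B(\mathscr{D}\setminus Y)$ agrees with the one coming from the natural transformation $f\Phi\Rightarrow\pi$ restricted to the fibre.

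The one genuine caveat is the one you flag yourself: the entire analytic content of Theorem B sits in the invoked lemma that a homotopy-locally-constant diagram of spaces over $\mathscr{D}$ has $\hocolim\to B\mathscr{D}$ with homotopy fibres its values --- this is Quillen's Lemma in [Q, Section 1], equivalently the $\pi_*$-Kan / Bousfield--Friedlander realization criterion applied to the bisimplicial set you wrote down, and it is precisely where the hypothesis on $[v_*]$ is consumed. Your proposal defers this rather than proving it. As written, then, it is a correct and complete reduction of Theorem B to that standard lemma, but not a self-contained proof; whether that counts as a gap depends on whether the realization lemma is admitted as a known external result, in the same way the paper under review admits Theorem B itself.
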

\qed

There are the natural imbeddings functor
\begin{equation}
i_Y\colon f^{-1}Y\to Y\setminus f,\ \ X\mapsto (X,\id_Y)
\end{equation}
and
\begin{equation}
j_Y\colon f^{-1}Y\to f\setminus Y,\ \ X\mapsto (X,\id_Y)
\end{equation}

Let $f\colon \mathscr{C}\to\mathscr{D}$ be a functor. The category $\mathscr{C}$ is called {\it pre-fibred} over $\mathscr{D}$ via $f$, if for any $Y\in \mathscr{D}$ the functor $i_Y$ has a right adjoint.

As well, the category $\mathscr{C}$ is called {\it pre-cofibred} over $\mathscr{D}$ via $f$, if for any $Y\in\mathscr{D}$ the functor $j_Y$ has a left adjoint.

The right adjoint functor $R_Y$ to $i_Y$ (if it exists) assigns to each $(X,v)$ an object of $f^{-1}Y$, which is denoted by $v^*X$.

The left adjoint functor $L_Y$ to $j_Y$ (if it exists) assigns to each $(X,v)$ an object of $f^{-1}Y$, denoted by $v_*X$.

Let $v\colon Y\to Y^\prime$ be a morphism in $\mathscr{D}$.
Then the composition
\begin{equation}\label{bc}
v^*:=  R_Y  \circ [v^*]\circ i_{Y^\prime}: f^{-1}Y^\prime\to f^{-1}Y
\end{equation}
is called the {\it base-change functor}.

As well, for the same $v\colon Y\to Y^\prime$,
the composition
\begin{equation}
v_*:=L_{Y^\prime}\circ [v_*]\circ j_{Y}\colon f^{-1}Y\to f^{-1}Y^{\prime}
\end{equation}
is called the {\it cobase-change} functor.

The Corollary of Quillen's Theorem B says the following:
\begin{theorem}[Corollary to Quillen Theorem B]\label{qcorollb}
Let $f\colon \mathscr{C}\to\mathscr{D}$ be a functor. Assume that $\mathscr{C}$ is pre-fibred (corresp., pre-cofibred) over $\mathscr{D}$ via $f$, and that for any morphism $u\colon Y\to Y^\prime$ in $\mathscr{D}$
the base change functor $u^*\colon f^{-1}Y^\prime\to f^{-1}Y$ (corresp., the cobase change functor $u_*\colon f^{-1}Y\to f^{-1}Y^\prime$) is a weak homotopy equivalence. Then for any $Y\in\mathscr{D}$ the category $f^{-1}Y$ is the homotopy fiber of $f$ over $Y$. More precisely, the cartesian diagram
\begin{equation}
\xymatrix{
f^{-1}Y\ar[r]\ar[d]&\mathscr{C}\ar[d]^{f}\\
\{Y\}\ar[r]&\mathscr{D}
}
\end{equation}
is homotopy cartesian, for any object $Y\in\mathscr{D}$.
\end{theorem}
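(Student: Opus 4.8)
The plan is to derive this Corollary from Quillen's Theorem B (Theorem \ref{qtheoremb}), using the adjunctions packaged into the notion of a pre-fibred functor; I will treat the pre-fibred case, the pre-cofibred one being strictly dual. The argument splits into two stages: first I would verify the hypothesis of Theorem \ref{qtheoremb}, namely that $[u^*]\colon Y^\prime\setminus f\to Y\setminus f$ is a homotopy equivalence for every arrow $u\colon Y\to Y^\prime$ in $\mathscr{D}$; then I would transport the homotopy cartesian square that Theorem \ref{qtheoremb} produces, whose left corners are $Y\setminus f$ and $Y\setminus\mathscr{D}$, onto the square in the statement, whose left corners are $f^{-1}Y$ and $\{Y\}$.

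For the first stage, the starting observation is that, since $f$ is pre-fibred, each inclusion $i_Y\colon f^{-1}Y\to Y\setminus f$ admits a right adjoint $R_Y$. By Proposition \ref{propbasic}(ii), $B(i_Y)$ and $B(R_Y)$ are then mutually inverse homotopy equivalences, so in particular $B(f^{-1}Y)\simeq B(Y\setminus f)$ for every $Y$. The crux is to feed this into the factorization $u^*=R_Y\circ[u^*]\circ i_{Y^\prime}$ that defines the base-change functor. Passing to classifying spaces and cancelling $B(i_Y)\circ B(R_Y)\simeq\id$ and $B(i_{Y^\prime})\circ B(R_{Y^\prime})\simeq\id$, I expect to obtain $B([u^*])\simeq B(i_Y)\circ B(u^*)\circ B(R_{Y^\prime})$; since $B(u^*)$ is a homotopy equivalence by hypothesis and the two flanking maps are homotopy equivalences, $B([u^*])$ will be one as well. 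This is the step I expect to be the main obstacle, as it is where both the pre-fibredness and the base-change assumption get converted into precisely the input Theorem \ref{qtheoremb} demands; care is needed that the transformations supplied by the adjunction are honest natural transformations, so that Proposition \ref{propbasic}(i) legitimately turns them into homotopies.

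Granting the hypothesis, Theorem \ref{qtheoremb} yields that the square with corners $Y\setminus f$, $\mathscr{C}$, $Y\setminus\mathscr{D}$, $\mathscr{D}$ is homotopy cartesian. For the second stage I would paste this against the two vertical maps $i_Y\colon f^{-1}Y\to Y\setminus f$ and the inclusion $\{Y\}\to Y\setminus\mathscr{D}$ sending $Y$ to the initial object $(Y,\id_Y)$. The former is a homotopy equivalence by the adjunction above, and the latter is one because $Y\setminus\mathscr{D}$ is contractible (it has an initial object, by Proposition \ref{propbasic}(iii)); moreover the composites $f^{-1}Y\to Y\setminus f\to\mathscr{C}$ and $\{Y\}\to Y\setminus\mathscr{D}\to\mathscr{D}$ reduce to the evident inclusions into $\mathscr{C}$ and $\mathscr{D}$. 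Since homotopy cartesianness is preserved under replacing the corners of a homotopy cartesian square by homotopy equivalent categories compatibly with the maps to $\mathscr{D}$, the outer square --- which is exactly the diagram in the statement --- is homotopy cartesian, as desired. Finally, the pre-cofibred case runs verbatim with $j_Y$ in place of $i_Y$, its left adjoint $L_Y$, the cobase-change functor $u_*$, and the dual half of Theorem \ref{qtheoremb}.
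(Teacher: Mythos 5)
Your proposal is correct and follows essentially the same route as the paper: the paper, which quotes this result from [Q], justifies it exactly by your first stage --- the adjunction between $i_Y$ and $R_Y$ makes $B(i_Y)$ a homotopy equivalence by Proposition \ref{propbasic}(ii), whence $[u^*]$ is a homotopy equivalence precisely when $u^*$ is, so the hypotheses of Theorem \ref{qtheoremb} hold. Your second stage (transporting the homotopy cartesian square of Theorem \ref{qtheoremb} along the corner equivalences $i_Y\colon f^{-1}Y\to Y\setminus f$ and $\{Y\}\to Y\setminus\mathscr{D}$) is the standard completion of the argument, which the paper leaves implicit in its citation of Quillen.
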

This property on the level of topological spaces just means that $Bf: B\mathscr{C}\to B\mathscr{D}$ is a quasi-fibration, with the fibers $B(f^{-1}Y)$ (see Definition \ref{defqf}).

It is clear that the fulfillment of the assumptions of Theorem \ref{qcorollb} implies the fulfillment of the assumptions of Theorem \ref{qtheoremb}. Indeed,
the functors $i_Y\colon f^{-1}Y\to Y\setminus f$ and $j_Y\colon f^{-1}Y\to f\setminus Y$ are homotopy equivalences, as they have adjoints, by Proposition \ref{propbasic}(ii). Then to say that $v^*$ (corresp., $v_*$) is a homotopy equivalence is the same that to say that $[v^*]$ (corresp., $[v_*]$) is a homotopy equivalence.

In fact, the assumption of Corollary to Quillen's Theorem B imply a stronger geometrical property than $Bf$ to be a quasi-fibration. Namely, one proves in Proposition \ref{propm} that $Nf$ is a simplicial homotopy fibration (a sharp map).

\subsection{\sc Quillen Theorem B and homotopy fibrations of simplicial sets}\label{section1.4}
Here we prove a result\footnote{The author is indebted to the anonymous referee for Proposition \ref{propm}, as well as for its proof.} providing a ``geometric interpretation'' of the Corollary to Quillen Theorem B.

In this section, we use the following ``categorification'' of the simplicial category $\Delta$.

Denote by $\Delta_n$ the {\it category} with $n+1$ objects generated by the linear graph
$$
0\rightarrow 1\rightarrow 2\rightarrow\dots\rightarrow n
$$
We can talk on functors $\Delta_m\to \Delta_n$. These functors are in 1-to-1 correspondence with the morphisms $[m]\to [n]$
in the simplicial category $\Delta$. Thus, we can recover the category $\Delta$ as the smallest full subcategory in $\Cat$, containing all categories $\Delta_n$, $n\ge 0$.

\subsubsection{\sc $h$-fibred and aspherical functors}\label{extrasect}
We say that a functor $f\colon \mathscr{C}\to\mathscr{D}$ is {\it $h$-fibred} if it is pre-fibred, and the base-change functor
$v^*\colon f^{-1}(Y^\prime)\to f^{-1}(Y)$ is a weak equivalence for any arrow $v\colon Y\to Y^\prime$ in $\mathscr{D}$.

One has
\begin{lemma}\label{base1}
The class of $h$-fibred functors is closed under base change.
\end{lemma}
\begin{proof}
One firstly shows that a functor $f\colon \mathscr{C}\to\mathscr{D}$ is a pre-fibred if and only if any morphism $u\colon a\to b$ in $\mathscr{D}$ has a {\it cartesian lift} whose end-point is any given object in $f^{-1}(b)$, that is, that the original Grothendieck definition [SGA1, Exp. VI], and the Quillen definition [Q2] of a pre-(co)fibred morphism agree, cf. [Ma, Lemme 1.1.16]. It implies that pre-fibred (corresp., fibred) functors are stable with respect to arbitrary base change, see [SGAI, Exp. VI, Cor. 6.9]. The fibers of the base-changed functor are identified with the fibers of $f$, as well as the corresponding morphisms $u^*$. 
\end{proof}

\begin{remark}{\rm
The pre-cofibred functors are also closed under arbitrary base-change, which is proven similarly. One can define an $h$-cofibred functor as a pre-cofibred functor $f\colon \mathscr{C}\to\mathscr{D}$ such that for any morphism $v\colon Y\to Y^\prime$ in $\mathscr{D}$ the corresponding functor $v_*\colon f^{-1}(Y)\to f^{-1}(Y^\prime)$ is a weak equivalence. Then one shows that the $h$-cofibred functors are stable under base-change.
}
\end{remark}

Recall the definition of {\it aspherical} functors from [Ma, Section 1.1]. A functor $f\colon \mathscr{C}\to\mathscr{D}$ is called {\it aspherical} (resp., {\it coaspherical}) if, for each object $d\in\mathscr{D}$, the 
category $\mathscr{C}/d$ (resp. $d/\mathscr{C}$) is contractible.
\begin{lemma}\label{aspherical}
\begin{itemize}
\item[(i)]
Any (co)aspherical functor is a weak equivalence;
\item[(ii)]
Let $f\colon \mathscr{C}\to\mathscr{D}$ be a functor, $\mathscr{C}$ has initial (resp., final) object $*_\mathscr{C}$, and $f$ takes the initial (resp., the final) object in $\mathscr{C}$ to an initial (resp., final) object $f(*_\mathscr{C})$ in $\mathscr{D}$. Then $f$ is aspherical (resp., coaspherical).
\end{itemize}
\end{lemma}
\begin{proof}
(i): It is Quillen Theorem A, see [Q2]. (ii): The category $\mathscr{C}/d$ (resp., $d/\mathscr{C}$) contains an initial (resp., final) object $f(*_\mathscr{C})\to d$ (resp., $d\to f(*_\mathscr{C})$), and the result follows from Proposition \ref{propbasic}(iii)
\end{proof}

One harder result we use in the proof of Proposition \ref{propm} below, is
\begin{lemma}\label{base2}
The aspherical (resp., coaspherical) functors are closed under base-change along pre-fibred (resp., pre-cofibred) functors.
\end{lemma}
This follows from [Ma, Corollary 3.2.13] and its dual, because any pre-fibred (resp., pre-cofibred) functor is ``smooth'' (resp., ``proper'') in Grothendieck-Maltsiniotis terminology, thanks to Proposition \ref{propbasic}(ii).

\qed

\subsubsection{\sc $h$-fibred functors and Quillen's Theorem B}
\begin{prop}\label{propm}
For a pre-fibred (resp., pre-cofibred) functor $f\colon \mathscr{C}\to\mathscr{D}$ the following conditions are equivalent:
\begin{itemize}
\item[(1)] $f$ is $h$-fibred (resp., $h$-cofibred),
\item[(2)] $f$ satisfies the assumption (resp., the dual assumption) of Quillen Theorem B,
\item[(3)] $Nf$ is a simplicial $h$-fibration, see Definition \ref{sharpdef}.
\end{itemize}
\end{prop}

\begin{proof}
We establish the Proposition for pre-fibred functors; the dual statement is proved in a dual manner. Observe that statement (3) is self-dual.

For $(1)\Leftrightarrow(2)$, consider for a morphism $v\colon Y\to Y^\prime$ in $\mathscr{D}$ the commutative diagram
\begin{equation}
\xymatrix{
f^{-1}(Y^\prime)\ar[r]^{v^*}\ar[d]_{i_{Y^\prime}}&f^{-1}(Y)\\
Y^\prime\setminus f\ar[r]^{[v^*]}&Y\setminus f\ar[u]_{R_{Y}}
}
\end{equation}
The vertical functors $i_{Y^\prime}$ and $R_Y$ admit adjoints, and thus are (weak) homotopy equivalences by Proposition \ref{propbasic}(ii). Then $v^*$ is a weak equivalence iff $[v^*]$ is.

For $(3)\Rightarrow (2)$, consider, for any objects $Y,Y^\prime\in\mathscr{D}$, and for a map $v\colon Y\to Y^\prime$, the diagram:
\begin{equation}
\xymatrix{
Y^\prime\setminus f\ar[r]^{[v_*]}\ar[d]&Y\setminus f\ar[r]\ar[d]&\mathscr{C}\ar[d]^{f}\\
Y^\prime\setminus \mathscr{D}\ar[r]^{(v_*)}&Y\setminus\mathscr{D}\ar[r]&\mathscr{D}
}
\end{equation}
whose both squares are Cartesian. The category $Y\setminus \mathscr{D}$ is the category whose objects are pairs $(X,Y\xrightarrow{s} X)$ where $X$ is an object of $\mathscr{D}$ and $s$ is a morphism in $\mathscr{D}$; the morphisms are defined in natural way. This category is contractible by Proposition \ref{propbasic}(iii), as $(Y,Y\xrightarrow{\id}Y)$ is its initial object.
Thus, the arrow $(v_*)$ is a weak equivalence. Assuming (3), we know that $[v_*]$ is a weak equivalence, if $(v_*)$ is, what gives (2).

The non-trivial part is $(1)\Rightarrow (3)$. The argument uses Lemma \ref{base2} of Maltsiniotis.

By Rezk's result Proposition \ref{proprezk}(3), it is enough to prove that in any diagram of simplicial sets
\begin{equation}\label{dzn}
\xymatrix{
A\ar[r]^{h}\ar[d]&B\ar[r]\ar[d]&N(\mathscr{C})\ar[d]^{N(f)}\\
\Delta[m]\ar[r]&\Delta[n]\ar[r]&N(\mathscr{D})
}
\end{equation}
whose squares are Cartesian, the map $h$ is a weak equivalence. By Yoneda lemma, 
$$
\Hom_{\SSets}(\Delta[n],X)=X_n
$$
Applying it to $X=N(\mathscr{D})$, we see that 
$$
\Hom_{\SSets}(\Delta[n],X)=\Hom_{\Cat}(\Delta_n,\mathscr{D})
$$
Thus, we can regard a diagram in $\Cat$, whose both squares are Cartesian:
\begin{equation}\label{dmn}
\xymatrix{
\mathscr{A}\ar[r]^{H}\ar[d]&\mathscr{B}\ar[r]\ar[d]&\mathscr{C}\ar[d]^{f}\\
\Delta_m\ar[r]^{\sigma}&\Delta_n\ar[r]&\mathscr{D}
}
\end{equation}
such that the diagram \eqref{dzn} it obtained by the term-wise application of the nerve functor to the diagram \eqref{dmn}.
It is thus enough to show that the functor $H$ is a weak equivalence of categories. 

The map $\sigma$ in \eqref{dmn} is a map $\sigma\colon [m]\to [n]$ in the category $\Delta$ (by slight abuse of notations, we denote both of these maps by $\sigma$).
We decompose the map $\sigma$ (on the categorical level) as the composition of two maps
$$
\Delta_m\xrightarrow{\sigma_1}\sigma(0)\setminus \Delta_n\xrightarrow{\sigma_2}0\setminus \Delta_n
$$
where the rightmost category $0\setminus \Delta_n$ is the same that $\Delta_n$. The first map $\sigma_1$ is naturally induced by $\sigma$, the second map $\sigma_2$ is $(v^*)$ for $v\colon 0\to \sigma(0)$.

Then our diagram \eqref{dmn} becomes one Cartesian square longer:
\begin{equation}\label{dmnbis}
\xymatrix{
\mathscr{A}_1\ar[r]^{H_1}\ar[d]&\mathscr{A}_2\ar[r]^{H_2}\ar[d]^{g_1}&\mathscr{B}\ar[r]\ar[d]^{g_2}&\mathscr{C}\ar[d]^{f}\\
\Delta_m\ar[r]^{\sigma_1}&\sigma(0)\setminus \Delta_n\ar[r]^{\sigma_2}&\Delta_n\ar[r]&\mathscr{D}
}
\end{equation}
Here $H=H_2\circ H_1$, and we prove that each of $H_1$ and $H_2$ is a weak equivalence.

We start with the easier case of $H_2$. The maps $g_1$ and $g_2$ in \eqref{dmnbis} are $h$-fibred, as $f$ is $h$-fibred by assumption, and by Lemma \ref{base1}. Both categories $\sigma(0)\setminus \Delta_n$ and $\Delta_n$ have initial objects.
The initial objects are $(\sigma(0),\id)$ and $0$, correspondingly. In the commutative diagram
\begin{equation}
\xymatrix{
\mathscr{A}_2\ar[rr]^{H_2}&&\mathscr{B}\\
g_1^{-1}(\sigma(0),\id)\ar[r]^{\id}\ar[u]^{i_1}&g_2^{-1}(\sigma(0))\ar[r]^{[v^*]}&g_2^{-1}(0)\ar[u]_{i_2}
}
\end{equation}
the embeddings $i_1$ and $i_2$ are weak equivalences, by Corollary to Quillen Theorem B. Moreover, $[v^*]$ is also a weak equivalence, as $f$ is $h$-fibred, and by (2). It implies that $H_2$ is a weak equivalence.

The proof that $H_1$ is also a weak equivalence relies on the theory of aspherical functors, see Section \ref{extrasect}. The functor $\sigma_1$ is aspherical by Lemma \ref{aspherical}, as it takes the initial object of $\Delta_m$ to the initial object of $\sigma(0)\setminus \Delta_n$.

As $g_1$ is $h$-fibred, Lemma \ref{base2} implies that $H_1$ is also aspherical. 
Then Lemma \ref{aspherical} implies that $H_1$ is a weak equivalence.

\end{proof}
\comment
(In the above proof, the functor $\sigma_2$ does not map the initial object to the initial one, and is not aspherical; therefore, the argument for $H_1$ is not applied to $H_2$. On the other hand, $\sigma_2$ is of the form $[v^*]$, when we identify $\Delta_n$ with $0\setminus \Delta_n$. As $f$ is $h$-fibred, it satisfies the assumption of Quillen Theorem B, by (2). It shows that $[v^*]$ is a weak equivalence. Given to that, $i_1$ and $i_2$ are weak equivalences, by Corollary to Quillen Theorem B. Note that Corollary to Quillen Theorem B is used only for functors whose target is a contractible category, and the result of (3) implies it in general.) 
\endcomment

\begin{remark}{\rm
Proposition \ref{propm} implies the Corollary to Quillen Theorem B, due to Proposition \ref{boryaqf}. However, the form given in 
Proposition \ref{propm}, is stronger than the original Quillen's result. 
}
\end{remark}

\subsection{\sc The fibred and cofibred Grothendieck constructions}\label{gc}
The Evrard construction discussed in Section 2 is expressed as the Grothendieck construction in category theory [SGA1, Expos\'{e}VI.8].
Here we recall the fibred and cofibred Grothendieck constructions, restricting ourselves to its properties necessary for the sequel. We refer the reader to loc.cit. and to [Th] for more detail.

Let $F\colon\mathscr{K}\to\Cat$ be a (strict) functor (an analogous construction also exists for $F$ a pseudo-functor).

There are two different constructions, both referred to as {\it the Grothendieck construction} of $F$, which are given as functors
$$
p_c\colon \mathscr{K}{\int _c}F\to \mathscr{K}\text{   and   }p_f\colon \mathscr{K}{\int_f}F\to\mathscr{K}^\opp
$$
where $p_c$ is cofibred and $p_f$ is fibred.

The objects of $\mathscr{K}\int_c F$ are pairs $(K,X)$ were $K\in \mathscr{K}$ and $X\in F(K)$.

A morphism $(k,x)\colon (K_1,X_1)\to (K_0,X_0)$ is given by a morphism $k\colon K_1\to K_0$ in $\mathscr{K}$ and a morphism
$x\colon F(k)(X_1)\to X_0$ in $F(K_0)$.

The composition is defined as $(k,x)\ldot (k^\prime, x^\prime)=(kk^\prime, x\ldot F(k)(x^\prime))$.

The map $p_c$ is defined as the projection onto the first component. The category $\mathscr{K}\int_c F$ is equivalent to the lax colimit of $F$, see Proposition \ref{colaxlimit}.

The objects of $\mathscr{K}\int_fF$ are pairs $(K,X)$ were $K\in \mathscr{K}$ and $X\in F(K)$.

A morphism $(k,x)\colon (K_1,X_1)\to (K_0,X_0)$ is given by a morphism $k\colon K_0\to K_1$ in $\mathscr{K}$ and a morphism
$x\colon X_1\to F(k)(X_0)$ in $F(K_1)$.

The composition is defined as $(k,x)\ldot (k^\prime,x^\prime)=(k^\prime k,F(k^\prime)(x)\ldot x^\prime)$.

The map $p_f$ is defined as the projection onto the first component. The category $\mathscr{K}\int_f F$ is equivalent to the lax limit of $F$, which is the statement dual to Proposition \ref{colaxlimit}.
\begin{lemma}\label{ntgr}
\begin{itemize}
\item[(1)] The functor $p_f\colon \mathscr{K}\int_f F\to\mathscr{K}^\opp$ 
expresses $\mathscr{K}\int_f F$ as a fibred category over $\mathscr{K}^\opp$,
the functor 
$p_c\colon \mathscr{K}\int_c F\to \mathscr{K}$ expresses $\mathscr{K}\int_c F$ as a cofibred category over $\mathscr{K}$;
\item[(2)] 
A natural transformation of functors $\theta\colon F\Rightarrow F^\prime\colon \mathscr{K}\to\Cat$ induces a functor
$$
[\theta]_f\colon \mathscr{K}\int_f F\to\mathscr{K}\int_f F^\prime
$$
of fibred categories over $\mathscr{K}^\opp$
and a functor
$$
[\theta]_c\colon \mathscr{K}\int_c F^\prime \to \mathscr{K}\int_c F
$$
of cofibred categories over $\mathscr{K}$.
\end{itemize}
\end{lemma}
\qed

Let $\theta_1,\theta_2\colon F\Rightarrow F^\prime\colon\mathscr{K}\to\Cat$ be two natural transformations of functors. In this context, it is possible to define a 3-arrow $\xi\colon \theta_1\Rrightarrow \theta_2$, conventionally called {\it a modification}.
Indeed, each $\theta_i$, $i=1,2$ is given by a functor $\theta_i(k)\colon F(k)\to F^\prime(k)$ such that for any $x\colon k_1\to k$ the corresponding diagram of functors commutes. Then a modification $\xi\colon \theta_1\Rrightarrow\theta_2$ is given, for each $k\in\mathscr{K}$, by a map $\xi(k)\colon \theta_1(k)\Rightarrow\theta_2(k)\colon F(k)\to F^\prime(k)$ of the corresponding functors, such that, for any $x\colon k_1\to k$, and for any $t\in F(k_1)$, the diagram
\begin{equation}
\xymatrix{
\theta_1(t)\ar[rr]^{\xi(k_1)(t)}\ar[d]_{F^\prime(x)}&&\theta_2(t)\ar[d]^{F^\prime(x)}\\
\theta_1(F(t))\ar[rr]^{\xi(k)(F(t))}&&\theta_2(F(t))
}
\end{equation}
commutes.

We can now extend Lemma \ref{ntgr} to the action of these modifications:
\begin{lemma}\label{ntgrbis}
Let $F,F^\prime\colon \mathscr{K}\to\Cat$ be two functors, $\theta_1,\theta_2\colon F\Rightarrow F^\prime$ maps of functors, and let $\xi\colon\theta_1\Rrightarrow\theta_2$ be a modification. Then $\xi$ induces a natural transformation of functors
$$
[\xi]_f\colon [\theta_1]_f\Rightarrow[\theta_2]_f\colon \mathscr{K}\int_f F\to\mathscr{K}\int_f F^\prime
$$
and a natural transformation of functors
$$
[\xi]_c\colon [\theta_2]_c\Rightarrow[\theta_1]_c\colon \mathscr{K}\int_c F^\prime\to\mathscr{K}\int_c F
$$
\end{lemma}

\qed

The following Proposition (see e.g. [Th, Prop. 1.3.1]) characterizes the Grothendieck construction category $\mathscr{K}\int_c F$ as the ``lax colimit'' of the functor $F\colon \mathscr{K}\to\Cat$.
\begin{prop}\label{colaxlimit}
Let $\mathscr{K}$ be a small category, $F\colon \mathscr{K}\to\Cat$ a strict functor, $\mathscr{C}$ a category. Then there is a bijection between the set of functors
$g\colon \mathscr{K}\int_c F\to\mathscr{C}$, and the set of data consisting of
\begin{itemize}
\item[(1)] for each object $K\in\mathscr{K}$, a functor $g(K)\colon F(K)\to\mathscr{C}$,
\item[(2)] for each morphism $k\colon K\to K^\prime$ in $\mathscr{K}$, a natural transformation
$$
g(k)\colon g(K)\to g(K^\prime)\circ F(k)
$$
\end{itemize}
such that $g(\id_K)=\id\colon g(K)\to g(K)$, and for $K^{\prime\prime}\xleftarrow{k^\prime} K^{\prime}\xleftarrow{k} K$ one has
\begin{equation}
g(k^\prime k)=g(k^\prime)\circ g(k)
\end{equation}
\end{prop}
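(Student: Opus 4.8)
The plan is to construct two mutually inverse maps between the set of functors $g\colon\mathscr{K}\int F\to\mathscr{C}$ and the set of data $(1)$, $(2)$. The whole argument is organized around one observation: every morphism $(k,x)\colon(K_1,X_1)\to(K_0,X_0)$ in $\mathscr{K}\int F$ factors canonically as
\[
(k,x)=(\id_{K_0},x)\ldot(k,\id_{F(k)(X_1)}),
\]
a \emph{vertical} morphism (first component an identity, hence living inside a single fibre $F(K_0)$) following a \emph{cartesian} morphism $(k,\id)$. This is immediate from the composition rule $(k,x)\ldot(k',x')=(kk',x\ldot F(k)(x'))$. Since a functor preserves composition, the factorization both dictates what any $g$ must do on morphisms and tells us how to assemble a functor from the data.

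First I would define the map from functors to data. Given $g$, let $g(K)$ be the restriction of $g$ to the fibre over $K$: on objects $g(K)(X)=g(K,X)$, and on a morphism $\varphi\colon X\to X'$ of $F(K)$, $g(K)(\varphi)=g(\id_K,\varphi)$. Because $F$ is strict, the vertical morphisms over $K$ form a subcategory isomorphic to $F(K)$, so $g(K)$ is a functor. For $k\colon K\to K'$ I would set $g(k)_X=g(k,\id_{F(k)(X)})$, a morphism $g(K)(X)\to(g(K')\circ F(k))(X)$. Naturality of $g(k)$ in $X$ follows by applying $g$ to the identity
\[
(k,\id)\ldot(\id_K,\varphi)=(\id_{K'},F(k)(\varphi))\ldot(k,\id),
\]
whose two sides both equal $(k,F(k)(\varphi))$; and the conditions $g(\id_K)=\id$ and $g(k'k)=g(k')\circ g(k)$ come from $g$ sending the identity $(\id_K,\id)$ and the composite $(k',\id)\ldot(k,\id)=(k'k,\id)$ to the corresponding composites in $\mathscr{C}$.

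Conversely, from the data I would define $g$ on objects by $g(K,X)=g(K)(X)$ and on a morphism $(k,x)\colon(K_1,X_1)\to(K_0,X_0)$ by
\[
g(k,x)=g(K_0)(x)\circ g(k)_{X_1},
\]
the composite forced by the factorization above. Preservation of identities is immediate from $g(\id_K)=\id$ and the functoriality of $g(K)$. The main step, and the place where the bookkeeping is most delicate, is verifying that this assignment respects composition: expanding both $g\bigl((k,x)\ldot(k',x')\bigr)$ and $g(k,x)\circ g(k',x')$ and matching them requires, simultaneously, the naturality of the transformations $g(\cdot)$, the cocycle identity $g(kk')=g(k')\circ g(k)$, and the functoriality of $F$ (so that $F(k)$ is applied to $x'$ exactly as in the composition rule). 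I expect this single composition check, with its interplay of variance among $\mathscr{K}$, $F$ and $\mathscr{C}$, to be the only genuine obstacle; the rest is formal.

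Finally I would check the two maps are mutually inverse. Starting from data, the fibrewise restriction of the constructed $g$ returns $g(K)$, and evaluating it on a cartesian morphism gives $g(k,\id_{F(k)(X)})=g(K')(\id)\circ g(k)_X=g(k)_X$, so the natural transformations are recovered as well. Starting from a functor $g$, the factorization $(k,x)=(\id_{K_0},x)\ldot(k,\id)$ shows that the functor rebuilt from its extracted data agrees with $g$ on every morphism, hence equals $g$. This establishes the asserted bijection.
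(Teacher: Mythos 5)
Your proposal is correct. Note that the paper itself gives no proof of this Proposition: it simply refers to [Th, Prop.\ 1.3.1], so your argument supplies exactly the verification that the paper delegates to the literature, and the route you take --- factoring every morphism of $\mathscr{K}\int F$ as a vertical morphism following a cartesian one, $(k,x)=(\id_{K_0},x)\ldot(k,\id_{F(k)(X_1)})$, and reading off both directions of the bijection from this factorization --- is the standard direct proof, in substance the one intended by the citation. The one step you flag but do not carry out, preservation of composition by the functor built from the data, does go through exactly with the ingredients you name: for $(K_2,X_2)\xrightarrow{(k',x')}(K_1,X_1)\xrightarrow{(k,x)}(K_0,X_0)$ one expands
\begin{equation*}
g\bigl((k,x)\ldot(k',x')\bigr)=g(K_0)(x)\circ g(K_0)\bigl(F(k)(x')\bigr)\circ g(kk')_{X_2},
\end{equation*}
replaces $g(kk')_{X_2}$ by $g(k)_{F(k')(X_2)}\circ g(k')_{X_2}$ (the cocycle condition, whiskered at $X_2$, using $F(kk')=F(k)F(k')$), and then uses naturality of $g(k)$ at $x'\colon F(k')(X_2)\to X_1$ to rewrite $g(K_0)(F(k)(x'))\circ g(k)_{F(k')(X_2)}$ as $g(k)_{X_1}\circ g(K_1)(x')$, which gives precisely $g(k,x)\circ g(k',x')$. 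Two small points: strictness of $F$ (in particular $F(\id_K)=\id_{F(K)}$) is indeed needed both for the fibres to sit inside $\mathscr{K}\int F$ as subcategories and in the checks above, as you indicate; and in your composition paragraph, with your labels ($k'$ applied first), the cocycle identity should read $g(kk')=g(k)\circ g(k')$ in the whiskered sense, i.e.\ $g(kk')_{X}=g(k)_{F(k')(X)}\circ g(k')_{X}$ --- your order ``$g(k')\circ g(k)$'' transcribes the Proposition's convention for the opposite composability; this is purely notational and does not affect the argument.
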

See e.g. [Th, Prop. 1.3.1] for a short and direct proof.

\qed

There is a similar statement expressing that $\mathscr{K}\int_fF$ is a lax limit of $F$, it is left to the reader.

\section{\sc The Evrard construction and its refinement}
There is a nice construction due to M.Evrard [Ev1,2] of replacing a functor $f\colon\mathscr{C}\to\mathscr{D}$ between small categories by a functor $f_h\colon\mathscr{H}(f)\to \mathscr{D}$, where $\mathscr{H}(f)$ is another category, such that:
\begin{itemize}
\item[(i)] there is a functor $i\colon \mathscr{C}\to \mathscr{H}(f)$ which is a homotopy equivalence,
\item[(ii)] the nerve $N(f_h)\colon N(\mathscr{H}(f))\to N(\mathscr{D})$ is a simplicial $h$-fibration, see Section \ref{extrasect} and Proposition \ref{propm},
\item[(iii)] the diagram
\begin{equation}
\xymatrix{
\mathscr{C}\ar[r]^{i}\ar[dr]_{f}&\mathscr{H}(f)\ar[d]^{f_h}\\
&\mathscr{D}
}
\end{equation}
commutes.
\end{itemize}
Here we provide a more economical version of Evrard's construction enjoying the same properties (i)-(iii). Our proof applies simultaneously to Evrard's original construction and to our refinement.

\subsection{\sc A categorical cocylinder}\label{sectionpath}
We define a categorical cocylinder $\Lambda\mathscr{D}$ of a category $\mathscr{D}$ which is the analogue of the topological cylinder $Y^{[0,1]}$.

Define the category $\Lambda_n\mathscr{D}$ for $n\ge 0$. An object of $\Lambda_n\mathscr{D}$ is a zig-zag
\begin{equation}\label{notpath}
\bar{Y}_0\rightarrow Y_1\leftarrow \bar{Y}_1\rightarrow Y_2\leftarrow\bar{Y}_2\rightarrow Y_3\leftarrow \dots\rightarrow Y_n\leftarrow\bar{Y}_n
\end{equation}
which we denote by $\underline{Y}(n)$, and a morphism $\underline{Y}(n)\to \underline{Z}(n)$ is the set of maps $t_i\colon Y_i\to Z_i$ and
$\bar{t}_i\colon \bar{Y}_i\to\bar{Z}_i$ making all squares commutative. 
Such a morphism is denoted by $\underline{t}\colon \underline{Y}(n)\to\underline{Z}(n)$.
Note that $\Lambda_0\mathscr{D}=\mathscr{D}$.

We discuss simultaneously the original Evrard construction and its refinement, proposed here.
For that reason, we use two categories: $\Delta_\str$ and $\Delta_\le$. 

The category $\Delta_\str$ has objects are $\{[0],[1], [2],\dots\}$ and the morphisms $[m]\to [n]$ are the strictly order-preserving maps
from $0<\dots<m$ to $0<\dots<n$ (that is, such maps $\phi$ that $\phi(i)<\phi(j)$ for $i<j$; in particular a morphism exists only when $m\le n$).

The category $\Delta_\le$ has the same objects as $\Delta_\str$, and there is a single morphism $[m]\to [n]$ when $m\le n$, and the empty set of morphisms otherwise. For $m\le n$, we interpret the single morphism $[m]\to [n]$ as the map $\phi$ of ordered sets $\{0<1<\dots<m\}\to\{0<1<\dots<n\}$ defined as
$$
\phi(i)=n-m+i
$$

The assignment $[n]\mapsto\Lambda_n(\mathscr{D})$ gives rise to functors
$$
F_\str\colon \Delta_\str\to\Cat\text{  and  }F_\le\colon \Delta_\le\to\Cat
$$

In both cases, for an order-preserving morphism
$\phi\colon [m]\to [n]$, and for a $\underline{Y}\in \Lambda_m\mathscr{D}$, we set
\begin{equation}\label{lphi}
\begin{aligned}
\ &\Lambda(\phi)(\underline{Y})=\\
&\bar{Y}_0\xrightarrow{\id} \bar{Y}_0\xleftarrow{\id} \bar{Y}_0\xrightarrow{\id} \dots\xleftarrow{\id}\bar{Y}_0\xrightarrow{\id} {\bar{Y}_0}\leftarrow{Y}_1\rightarrow\underset{\substack{{\phi(1)\text{-th}}\\
\text{place}}}{\bar{Y}_1}\xleftarrow{\id}\bar{Y}_1\xrightarrow{\id} \dots\xrightarrow{\id}{\bar{Y}_1}\leftarrow Y_2\rightarrow\underset{\substack{{\phi(2)\text{-th}}\\ \text{place}}}{\bar{Y}_2}\xleftarrow{\id}\bar{Y}_2\xrightarrow{\id}\dots
\end{aligned}
\end{equation}
Finally, we define two categories $\Lambda_\str\mathscr{D}$ and $\Lambda_\le\mathscr{D}$ as the cofibrant Grothendieck constructions
\begin{equation}
\Lambda_\str\mathscr{D}=\Delta_\str\int_c F_\str
\end{equation}
and
\begin{equation}
\Lambda_\le\mathscr{D}=\Delta_\le\int_c F_\le
\end{equation}
We use the notation $\Lambda_*\mathscr{D}$ when the statement holds for both of these two categories.

Define two maps $p_0,p_1:\Lambda_*\mathscr{D}\to\mathscr{D}$ by setting
\begin{equation}
p_0([n],\underline{Y})=\bar{Y}_0,\ \ p_1([n],\underline{Y})=\bar{Y}_n
\end{equation}

\comment
Evrard gives [Ev1] the following definition:
\begin{defn}\label{defhe}{\rm
Let $f,g\colon \mathscr{C}_1\to\mathscr{C}_2$ be functors. They are called {\it homotopic in the sense of Evrard} if there is a functor
$H\colon \mathscr{C}_1\to\Lambda_n\mathscr{C}_2$, for some $n\ge 1$, such that $p_0\circ H=f$ and $p_1\circ H=g$. A functor $f\colon \mathscr{C}_1\to \mathscr{C}_2$ is called {\it a homotopy equivalence in the sense of Evrard} if there exists a functor $f^\prime\colon \mathscr{C}_2\to\mathscr{C}_1$ such that both compositions $f\circ f^\prime$ and $f^\prime\circ f$ are homotopy equivalent to the identity functors in the sense of Evrard.
}
\end{defn}
One easily shows that if there is a natural transformation $f\to g$ between two functors $f,g\colon \mathscr{C}_1\to\mathscr{C}_2$, they are homotopic in the sense of Evrard. That is, all statements of Proposition \ref{propbasic} hold if the homotopy is understood in the sense of Definition \ref{defhe}. The Evrard homotopy relation is a priori a weaker homotopy relation than the Quillen's one, see Definition \ref{defhq}.

We have:
\begin{prop}\label{prope1}
Let $f,g\colon \mathscr{C}_1\to\mathscr{C}_2$ be functors homotopic in the sense of Evrard. Then the corresponding maps $Bf,Bg\colon B\mathscr{C}_1\to B\mathscr{C}_2$ are homotopic. That is, the Evrard homotopy relation coincides with the Quillen homotopy relation.
\end{prop}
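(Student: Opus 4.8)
The plan is to prove the substantive implication — that an Evrard homotopy between $f$ and $g$ forces $Bf$ and $Bg$ to be homotopic — by unwinding an Evrard homotopy into an ordinary zig-zag of natural transformations and then invoking Proposition \ref{propbasic}(i) on each step.

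\textbf{Step 1 (Unwinding the path functor).} First I would translate the datum of an Evrard homotopy into elementary terms. By Definition \ref{defhe} we are given $n\ge 1$ and a functor $H\colon\mathscr{C}_1\to\Lambda_n\mathscr{C}_2$ with $p_0\circ H=f$ and $p_1\circ H=g$. By the very definition of $\Lambda_n\mathscr{C}_2$ (objects are zig-zags, morphisms are families of maps making every square commute), evaluating $H$ position by position produces functors $\bar F_0,F_1,\bar F_1,F_2,\dots,F_n,\bar F_n\colon\mathscr{C}_1\to\mathscr{C}_2$: on an object $X$ the component $\bar F_i(X)$ (resp. $F_i(X)$) is the $\bar Y_i$ (resp. $Y_i$) entry of the zig-zag $H(X)$, and on a morphism $w$ the value is the corresponding component $\bar t_i$ (resp. $t_i$) of the morphism of zig-zags $H(w)$. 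Functoriality of each $\bar F_i,F_i$ is immediate from functoriality of $H$, since composition in $\Lambda_n\mathscr{C}_2$ is taken componentwise. The fixed arrows of the zig-zag $H(X)$ then assemble into natural transformations
\[
\bar F_0\Rightarrow F_1\Leftarrow\bar F_1\Rightarrow F_2\Leftarrow\cdots\Rightarrow F_n\Leftarrow\bar F_n,
\]
the naturality square for, say, $\bar F_0\Rightarrow F_1$ being exactly the commuting square demanded of the morphism of zig-zags $H(w)$. Finally, the conditions $p_0\circ H=f$ and $p_1\circ H=g$ read $\bar F_0=f$ and $\bar F_n=g$.

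\textbf{Step 2 (Chaining homotopies).} Next I would apply Proposition \ref{propbasic}(i) to each of the $2n$ natural transformations produced in Step 1. Each one, irrespective of the direction in which its arrow points, yields a homotopy between the two induced maps of classifying spaces; the direction is immaterial here, since homotopy of continuous maps is a symmetric relation. Reading the zig-zag from left to right therefore gives a finite chain
\[
Bf=B\bar F_0\simeq BF_1\simeq B\bar F_1\simeq BF_2\simeq\cdots\simeq BF_n\simeq B\bar F_n=Bg .
\]

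\textbf{Step 3 (Conclusion).} Since homotopy of maps of topological spaces is an equivalence relation, transitivity applied to the chain of Step 2 gives $Bf\simeq Bg$, which is precisely homotopy in the sense of Definition \ref{defhq}. This is the Evrard-to-Quillen implication; the reverse, on the level of the generating data, was already recorded just before the Proposition, namely that any natural transformation is itself an Evrard homotopy (and induces a homotopy of $B$-maps by Proposition \ref{propbasic}(i)), so the two homotopy relations agree on the same elementary homotopies, yielding the stated coincidence. The only point requiring care is the bookkeeping of Step 1 — recognizing that a single functor into the path category $\Lambda_n\mathscr{C}_2$ is the same datum as a zig-zag of natural transformations between functors $\mathscr{C}_1\to\mathscr{C}_2$ — after which the argument is purely formal; I anticipate no genuine obstacle.
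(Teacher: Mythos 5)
Your proof is correct and takes essentially the same route as the paper's: both unwind the Evrard homotopy $H\colon\mathscr{C}_1\to\Lambda_n\mathscr{C}_2$ into $2n+1$ componentwise functors joined by $2n$ natural transformations (with the outer ones equal to $f$ and $g$), and then apply Proposition \ref{propbasic}(i) to each, using symmetry and transitivity of homotopy of maps of spaces. Your Steps 2--3 simply spell out what the paper compresses into ``now the result follows from Proposition \ref{propbasic}(i)''.
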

\begin{proof}
Let $f,g\colon\mathscr{C}_1\to\mathscr{C}_2$ be homotopic in the sense of Evrard. Then the corresponding functor $H\colon \mathscr{C}_1\to
\Lambda_n\mathscr{C}_2$ gives rise to $2n+1$ functors $$\bar{Y}_0,Y_1,\bar{Y}_1,Y_2,\dots,Y_n,\bar{Y}_n\colon \mathscr{C}_1\to\mathscr{C}_2$$
and natural transformations $\gamma_i\colon \bar{Y}_{i-1}\to Y_i$ and $\delta_i\colon Y_i\to\bar{Y}_i$, $i=1\dots n$. Moreover, $\bar{Y}_0=f$ and $\bar{Y}_n=g$. Now the result follows from Proposition \ref{propbasic}(i).
\end{proof}
In the same time, the Evrard homotopy relation leaves us more flexibility for constructions of homotopies, as we will now see.
\endcomment

Let $\underline{Y}\in\Lambda_n\mathscr{D}$ be a free path. Consider the path $\underline{p}_0(\underline{Y})$ which is, in the notations of \eqref{notpath}, the following path in $\Lambda_n\mathscr{D}$:
\begin{equation}
\bar{Y}_0\xrightarrow{\id}\bar{Y}_0\xleftarrow{\id}\bar{Y}_0\xrightarrow{\id}\bar{Y}_0\xleftarrow{\id}\bar{Y}_0\dots
\end{equation}
\begin{prop}\label{prope2}
The functor $\underline{p}_0\colon\Lambda_n\mathscr{D}\to\Lambda_n\mathscr{D}$ is connected to the identity functor of $\Lambda_n\mathscr{D}$ by a zig-zag of natural transformations of functors.
\end{prop}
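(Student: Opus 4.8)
The plan is to reduce the statement to constructing a finite zig-zag of natural transformations between endofunctors of $\Lambda_n\mathscr{D}$ that joins $\id_{\Lambda_n\mathscr{D}}$ to $\underline{p}_0$. Indeed, by Definition \ref{defhe} it suffices to produce a functor $H\colon\Lambda_n\mathscr{D}\to\Lambda_m(\Lambda_n\mathscr{D})$, for some $m\ge 1$, with $p_0\circ H=\id$ and $p_1\circ H=\underline{p}_0$; and, exactly as in the extraction carried out in the proof of Proposition \ref{prope1}, giving such an $H$ is the same as giving a diagram of endofunctors of $\Lambda_n\mathscr{D}$ of the shape of a length-$m$ zig-zag $G_0\to A_1\leftarrow G_1\to A_2\leftarrow\cdots\to A_m\leftarrow G_m$, the arrows being natural transformations pointing into the apices $A_i$, with $G_0=\id$ and $G_m=\underline{p}_0$. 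The one genuine issue is that $\id$ and $\underline{p}_0$ cannot be joined by a single natural transformation: at a slot $\bar Y_i$ there is in general no morphism at all between $\bar Y_i$ and $\bar Y_0$, so the homotopy must be routed through auxiliary functors whose intermediate slots are the apex objects $Y_i$ of the path.

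Write the arrows of a path $\underline{Y}\in\Lambda_n\mathscr{D}$ as $a_i\colon\bar Y_{i-1}\to Y_i$ and $b_i\colon\bar Y_i\to Y_i$, for $1\le i\le n$. Concretely, for $0\le k\le n$ I would take the ``partial collapse'' endofunctor $F_k$ sending $\underline{Y}$ to the path that coincides with $\underline{Y}$ up to the slot $\bar Y_k$ and is constant equal to $\bar Y_k$, with identity arrows, from there on; thus $F_n=\id$ and $F_0=\underline{p}_0$. Between $F_k$ and $F_{k-1}$ I interpose the apex functor $F_k'$ ($1\le k\le n$) sending $\underline{Y}$ to the path that coincides with $\underline{Y}$ up to $\bar Y_{k-1}$, then has the arrow $a_k\colon\bar Y_{k-1}\to Y_k$, and is constant equal to $Y_k$, with identity arrows, from the $Y_k$-slot on. Each $F_k$ and $F_k'$ is visibly a well-defined endofunctor of $\Lambda_n\mathscr{D}$, its value on a morphism $\underline{t}$ being assembled from the components $t_i,\bar t_i$ of $\underline{t}$ up to the relevant level, followed by the appropriate repeated component and identities.

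For each $k$ there are then two evident natural transformations
\begin{equation*}
F_k\xrightarrow{\ \alpha_k\ }F_k'\xleftarrow{\ \beta_k\ }F_{k-1},
\end{equation*}
where $\alpha_k$ is the identity in the slots before $\bar Y_k$ and is $b_k\colon\bar Y_k\to Y_k$ in every slot from $\bar Y_k$ on, while $\beta_k$ is the identity in the slots before $Y_k$ and is $a_k\colon\bar Y_{k-1}\to Y_k$ in every slot from $Y_k$ on. Verifying that $\alpha_k$ and $\beta_k$ are morphisms in $\Lambda_n\mathscr{D}$ comes down to the commutativity of the squares along the path, and each such square collapses to one of the trivial identities $\id\circ b_k=b_k$ or $a_k\circ\id=a_k$; naturality in $\underline{Y}$ holds because the components $a_k,b_k$ commute with every morphism $\underline{t}\colon\underline{Y}\to\underline{Z}$, which is precisely the condition built into the definition of morphisms of $\Lambda_n\mathscr{D}$. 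Splicing these steps together for $k=n,n-1,\dots,1$ produces the zig-zag
\begin{equation*}
\id=F_n\xrightarrow{\alpha_n}F_n'\xleftarrow{\beta_n}F_{n-1}\xrightarrow{\alpha_{n-1}}\cdots\xrightarrow{\alpha_1}F_1'\xleftarrow{\beta_1}F_0=\underline{p}_0,
\end{equation*}
which has exactly $n$ apices and hence is the same datum as a functor $H\colon\Lambda_n\mathscr{D}\to\Lambda_n(\Lambda_n\mathscr{D})$ with $p_0\circ H=\id$ and $p_1\circ H=\underline{p}_0$, as required.

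The main obstacle is conceptual rather than computational: one must realize that the Evrard homotopy has to be threaded through the apex objects $Y_k$, since the $\bar Y$-slots admit no comparison arrows at all, and then choose the intermediate functors $F_k,F_k'$ so that the two natural transformations at each step point in the alternating directions dictated by the shape of $\Lambda_m$. Once this pattern is found, every commuting square that must be checked is trivial, and naturality is automatic from the definition of morphisms in $\Lambda_n\mathscr{D}$.
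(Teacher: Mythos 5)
Your proof is correct and takes essentially the same approach as the paper: your functors $F_k$ and $F_k'$ are exactly the paper's objects $\underline{Y}_{(k)}$ and $\underline{Y}^{(k)}$ made into endofunctors, and your transformations $\alpha_k,\beta_k$ are the paper's $\beta_k,\alpha_k$ (labels interchanged), spliced into the same zig-zag functor $H\colon\Lambda_n\mathscr{D}\to\Lambda_n(\Lambda_n\mathscr{D})$, merely traversed from $\id$ to $\underline{p}_0$ instead of the reverse. Since the Evrard relation is symmetric under reversing paths, this is the identical argument.
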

\begin{proof}
For $k\le n$, define the objects $\underline{Y}^{(k)}$ and $\underline{Y}_{(k)}$ in $\Lambda_n\mathscr{D}$, as the paths
\begin{equation}
\bar{Y}_0\rightarrow Y_1\leftarrow \bar{Y}_1\rightarrow\dots\leftarrow\bar{Y}_{k-1}\rightarrow Y_k\xleftarrow{\id}Y_k\xrightarrow{\id}Y_k\leftarrow\dots
\end{equation}
and
\begin{equation}
\bar{Y}_0\rightarrow Y_1\leftarrow \bar{Y}_1\rightarrow\dots\leftarrow\bar{Y}_{k-1}\rightarrow Y_k\leftarrow \bar{Y}_k\xrightarrow{\id}\bar{Y}_k\xleftarrow{\id}\bar{Y}_k\rightarrow\dots
\end{equation}
correspondingly.

In these notations, $\underline{p}_0(\underline{Y})=\underline{Y}_{(0)}$, and $\underline{Y}=\underline{Y}_{(n)}$.

We claim that there is a zig-zag of natural transformations, from $\underline{p}_0$ to $\id$,
sending $\underline{Y}$ to the path
\begin{equation}
\underline{p}_0(\underline{Y})=\underline{Y}_{(0)}\xrightarrow{\alpha_1}\underline{Y}^{(1)}\xleftarrow{\beta_1}\underline{Y}_{(1)}\xrightarrow{\alpha_2}
\underline{Y}^{(2)}\xleftarrow{\beta_2}\underline{Y}_{(2)}\rightarrow\dots\xrightarrow{\alpha_n}
\underline{Y}^{(n)}\xleftarrow{\beta_n}\underline{Y}_{(n)}=\underline{Y}
\end{equation}
where the maps $\underline{Y}_{(i-1)}\xrightarrow{\alpha_{i}}\underline{Y}^{(i)}$ and $\underline{Y}^{(i)}\xleftarrow{\beta_i}\underline{Y}_{(i)}$ are as follows:
\begin{equation}
\xymatrix{
\underline{Y}_{(k)}\ar[d]_{\beta_k}:\ \ \bar{Y}_0\ar[r]&Y_1&\dots\ar[r]\ar[l]&Y_{k-1}\ar[d]^{\id}&\bar{Y}_{k-1}\ar[l]\ar[r]^{a}\ar[d]^{\id}&Y_k\ar[d]^{\id}&\bar{Y}_k\ar[l]_{b}\ar[r]^{\id}\ar[d]^{b}&\bar{Y}_k\ar[d]^{b}&\dots\ar[l]\\
\underline{Y}^{(k)}:\ \ \bar{Y}_0\ar[r]&Y_1&\dots\ar[r]\ar[l]&Y_{k-1}&\bar{Y}_{k-1}\ar[l]\ar[r]^{a}&Y_k&Y_k\ar[l]_{\id}\ar[r]^{\id}&Y_k&\dots\ar[l]\\
\underline{Y}_{(k-1)}\ar[u]^{\alpha_k}:\ \ \bar{Y}_0\ar[r]&Y_1&\dots\ar[r]\ar[l]&Y_{k-1}\ar[u]_{\id}&\bar{Y}_{k-1}\ar[l]\ar[r]^{\id}\ar[u]_{\id}&\bar{Y}_{k-1}\ar[u]_{a}&\bar{Y}_{k-1}\ar[l]_{\id}\ar[r]^{\id}\ar[u]_{a}&\bar{Y}_{k-1}\ar[u]_{a}&\dots\ar[l]
}
\end{equation}
We are done.
\end{proof}

\begin{coroll}\label{corre}
The categories $\Lambda_n\mathscr{D}$, $n\ge 0$, are homotopy equivalent to the category $\mathscr{D}$.
\end{coroll}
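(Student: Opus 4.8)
The plan is to realize $p_0$ (the ``start point'' functor) as a homotopy equivalence in every case, by contracting each path to its initial object via Proposition \ref{prope2}.

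First the fixed-level statement. For $n\ge 1$ I introduce the ``constant path'' functor $s_n\colon\mathscr D\to\Lambda_n\mathscr D$ sending $Y$ to $Y\xrightarrow{\id}Y\xleftarrow{\id}\cdots\xleftarrow{\id}Y$. Then $p_0\circ s_n=\id_{\mathscr D}$ on the nose, while $s_n\circ p_0$ is precisely the functor $\underline p_0$ of Proposition \ref{prope2}. That proposition provides an Evrard homotopy $\underline p_0\simeq\id_{\Lambda_n\mathscr D}$, which by Proposition \ref{prope1} is a genuine homotopy $B(s_n\circ p_0)\simeq\id$. Hence $Bp_0$ and $Bs_n$ are mutually inverse up to homotopy, so $\Lambda_n\mathscr D\simeq\mathscr D$; the case $n=0$ is trivial since by convention $\Lambda_0\mathscr D=\mathscr D$.

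For $\Lambda\mathscr D=\Delta_\str\int\Lambda$ the structural observation I would exploit is that formula \eqref{lphi} makes every path $\Lambda(\phi)(\underline Y)$ begin with $\bar Y_0$, so $p_0\circ\Lambda(\phi)=p_0$ for every $\phi$; thus the functors $p_0\colon\Lambda_n\mathscr D\to\mathscr D$ assemble into a natural transformation from the diagram $\Lambda$ to the constant diagram at $\mathscr D$, each component of which is an equivalence by the previous paragraph. I then identify the full subcategory of constant paths with $\Delta_\str\times\mathscr D$ (an object $([n],\mathrm{const}_Y)$ corresponds to $([n],Y)$, a morphism to a pair $(\phi,Y\to Z)$), writing $\kappa\colon\Delta_\str\times\mathscr D\to\Lambda\mathscr D$ for the inclusion and $q\times p_0$ for the retraction, where $q$ is the projection of the Grothendieck construction onto $\Delta_\str$. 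Then $(q\times p_0)\circ\kappa=\id$, and $P_0:=\kappa\circ(q\times p_0)$ sends $([n],\underline Y)$ to $([n],\mathrm{const}_{\bar Y_0})$.

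The heart of the argument, and the step I expect to be the main obstacle, is to promote the level-wise homotopies of Proposition \ref{prope2} to a single homotopy $\id_{\Lambda\mathscr D}\simeq P_0$, i.e. to assemble the functors $H$ into one functor $\mathcal H\colon\Lambda\mathscr D\to\Lambda(\Lambda\mathscr D)$ with $p_0\circ\mathcal H=P_0$ and $p_1\circ\mathcal H=\id$. This is delicate for two reasons: the length of the zig-zag produced by Proposition \ref{prope2} equals the path length $n$ and is therefore unbounded over the objects of $\Lambda\mathscr D$, so it cannot be packaged as an Evrard homotopy of fixed length; and one must check that the functors $\Lambda(\phi)$ intertwine the homotopies of different levels, i.e. that $H$ is natural with respect to the $\Delta_\str$-action on the intermediate paths $\underline Y^{(k)},\underline Y_{(k)}$. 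The identity $p_0\circ\Lambda(\phi)=p_0$ and the evident compatibility of the $\underline Y^{(k)},\underline Y_{(k)}$ with $\Lambda(\phi)$ should furnish this, but making it precise is the genuine work; alternatively one can invoke the Grothendieck--Thomason description of $B(\Delta_\str\int\Lambda)$ as the homotopy colimit of $B\Lambda$ and observe that this diagram is homotopy-constant at $B\mathscr D$.

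Granting $\id_{\Lambda\mathscr D}\simeq P_0$, it remains to contract the base. I claim $B\Delta_\str$ is contractible: let $S\colon\Delta_\str\to\Delta_\str$ adjoin a new smallest element, $S[n]=[n+1]$, with $S\phi$ fixing that minimum. Then $\eta_{[n]}\colon[n]\to S[n]$, $i\mapsto i+1$, is a natural transformation $\id\Rightarrow S$, and $\zeta_{[n]}\colon[1]\to S[n]$, $1\mapsto 1$, is a natural transformation $c_{[1]}\Rightarrow S$ from the constant functor at $[1]$ (naturality holds because every $S\phi$ fixes the new minimum). By Proposition \ref{propbasic}(i) both $\id$ and $c_{[1]}$ become homotopic to $S$ after applying $B$, so $B\Delta_\str$ is contractible. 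Combining this with $\id_{\Lambda\mathscr D}\simeq P_0=\kappa\circ(q\times p_0)$ shows that $\kappa$ and $q\times p_0$ are inverse homotopy equivalences, whence $B\Lambda\mathscr D\simeq B(\Delta_\str\times\mathscr D)=B\Delta_\str\times B\mathscr D\simeq B\mathscr D$, which is the assertion.
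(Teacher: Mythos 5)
Your first paragraph is exactly the paper's own argument: in the paper this corollary carries no proof beyond a \qed\ following Proposition \ref{prope2}, and for fixed $n$ the identities $p_0\circ s_n=\id_{\mathscr{D}}$ and $s_n\circ p_0=\underline{p}_0$, combined with Propositions \ref{prope2} and \ref{prope1}, are all that is intended. Your handling of $n=0$ by convention is harmless, and your cone-functor proof that $B\Delta_\str$ is contractible (the shift $S$ adjoining a new minimum, with natural transformations $\id\Rightarrow S$ and $c_{[1]}\Rightarrow S$, then Proposition \ref{propbasic}(i)) is correct.

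Where you genuinely part ways with the paper is the $\Lambda\mathscr{D}$ half, and your diagnosis of the obstacle there is right and important: the zig-zag of Proposition \ref{prope2} has length $n$ on the component $\Lambda_n\mathscr{D}$, so these homotopies cannot be packaged as an Evrard homotopy in the sense of Definition \ref{defhe}, which requires a single fixed $n$. Be aware, however, that your ``primary route'' cannot be repaired within the paper's toolkit even granting the naturality checks: a functor $\mathcal{H}\colon\Lambda\mathscr{D}\to\Lambda(\Lambda\mathscr{D})$ landing in paths of unbounded length is not known, by anything proved in the paper, to induce a homotopy $BP_0\simeq\id$ --- Proposition \ref{prope1} covers only targets $\Lambda_n$, and proving the general statement is essentially the corollary itself, so that route is circular as well as incomplete. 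What makes your proposal a complete proof is the alternative you name: by Thomason's theorem [Th], $B(\Delta_\str\int\Lambda)$ is the homotopy colimit of the diagram $[n]\mapsto B\Lambda_n\mathscr{D}$; the identity $p_0\circ\Lambda(\phi)=p_0$ (visible from \eqref{lphi}) makes $Bp_0$ a levelwise homotopy equivalence to the constant diagram at $B\mathscr{D}$; homotopy invariance of $\hocolim$ then gives $B\Lambda\mathscr{D}\simeq B\Delta_\str\times B\mathscr{D}$, and your contractibility of $B\Delta_\str$ finishes it. Read with the Thomason route as the actual argument, the proof is correct, and it supplies a justification that the paper itself silently omits --- at the price of importing the homotopy-colimit theorem, whereas the paper's elementary methods genuinely do not suffice for this half of the statement.
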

\qed

(Note that there is a dual construction $\Lambda^*(\mathscr{D})$ obtained by reversing of the oriantation of all arrows, and two maps $q_0,q_1\colon \Lambda^*(\mathscr{D})\to\mathscr{D}$, for which the analogs of Proposition \ref{prope2} and Corollary \ref{corre} hold. We leave the detail the details to the interested reader.)

We want to deduce that $\Lambda_*\mathscr{D}$ is as well homotopy equivalent to $\mathscr{D}$. This point requires some accuracy, as what we get is the following.

For each $n\ge 0$, $k\le n$, and $\underline{Y}\in\Lambda_n\mathscr{D}$, the functors $P^{n,k}\colon \underline{Y}\mapsto
 \underline{Y}^{(k)}$ and $P_{n,k}\colon \underline{Y}\to\underline{Y}_{(k)}$ are compatible with the action of $\Delta_\str$ (correspondingly, $\Delta_\le$), and, by Lemma \ref{ntgr}(2), give rise to functors 
$$
P^k,P_k\colon \Lambda_*\mathscr{D}\to \Lambda_*\mathscr{D}
$$
Now maps of functors $\alpha_k,\beta_k$ give rise, by Lemma \ref{ntgrbis}, to maps of functors
\begin{equation}\label{zz1}
P_0\xleftarrow{[\alpha_1]_c} P^1 \xrightarrow{[\beta_1]_c} P_1\xleftarrow{[\alpha_2]_c}  P^2 \xrightarrow{[\beta_2]_c} P_2\leftarrow\dots
\end{equation}
such that, for any fixed $\Omega=([n],\underline{Y})\in \Lambda_*\mathscr{D}$ there exists $N(n)$ such that 
$$
P_{s}(\Omega)=\Omega\text{  and  }P^s(\Omega)=\Omega
$$
for any $s\ge N(n)=N(n(\Omega))$. (Whence $P_0(n,\underline{Y})=\underline{Y}_{(0)}$ independently on $n$).
To conclude that the map $(n,\underline{Y})\mapsto \underline{Y}_{(0)}$ is homotopic to the identity map of $\Lambda_*\mathscr{D}$, we recall some results of [Mi] in the next Subsection.

\subsection{\sc Different homotopy relations}\label{diffhom}
We have at least two different concepts of ``homotopy of functors'' between small categories.

Let $\mathscr{C}_1,\mathscr{C}_2$ be two small categories, $f,g\colon \mathscr{C}_1\to\mathscr{C}_2$ two functors.

We write $f\simeq g$ if there is a zig-zag of maps of functors
$$
f_0\rightarrow f_1\leftarrow f_2\rightarrow\dots \rightarrow f_n
$$
with $f_0=f$, $f_N=g$. 
In that case we say that $f$ and $g$ are {\it homotopic} functors.

We say that $f$ and $g$ are {\it $B$-homotopic} functors if the maps $Bf,Bg\colon B\mathscr{C}_1\to B\mathscr{C}_2$ are homotopic maps of topological spaces. In that case we write $f\simeq_B g$.

It follows from Proposition \ref{propbasic}(i) that $f\simeq g$ implies $f\simeq_B g$.

It turns out that there is another homotopy relation between functors, denoted $f\simeq_E g$ and called {\it Evrard homotopy relation} such that one has
$$
f\simeq g\Rightarrow f\simeq_E g\Rightarrow f\simeq_B g
$$
and both implications are strict [Mi, Remark 2.9].

We follow [Mi] (where the author uses the notation $f\simeq_H g$ for our $f\simeq_E g$, and call it {\it Hoff homotopy} relation).

Denote by $I_n$ the category with objects $0,1,\dots,n$ defined by the graph
$$
0\rightarrow 1\leftarrow 2\rightarrow 3\leftarrow\dots \rightarrow n
$$
(for $n$ odd, when $n$ is even the rightmost arrow is directed leftward). 

Let $\mathscr{N}$ be the category with objects $0,1,2,\dots$ given by the graph
$$
0\rightarrow 1\leftarrow 2\rightarrow 3\leftarrow \dots
$$
Alternatively, one can define
$$
\mathscr{N}=\Delta_\le\int_c I_*
$$
\begin{defn}\label{deffin}{\rm
A functor $f\colon\mathscr{N}\to\mathscr{C}$ is {\it finite} if there exists $m$ such that $f(m)=f(n)$ for all $n\ge m$, and the corresponding morphisms in $\mathscr{N}$ are mapped to $\id_{f(m)}$.
}
\end{defn}

We denote by $\mathscr{N}(\mathscr{C})$ the category of all finite functors $\mathscr{N}\to\mathscr{C}$.

For any functor $h\colon \mathscr{N}\to \mathscr{C}$ we denote by $\alpha(h)\in\mathscr{C}$ the composition $\{0\}\to\mathscr{N}\xrightarrow{h}\mathscr{C}$.

For any {\it finite} functor $h\in\mathscr{N}(\mathscr{C})$ the ``end-value'' $\omega(h)\in\mathscr{C}$ is defined as 
$h(m)\in\mathscr{C}$ where $m$ is the number in the definition of a finite functor.

\begin{defn}{\rm
Two functors $f,g \colon\mathscr{C}\to\mathscr{D}$ are called {\it Evrard homotopic} if there is a functor 
$H\colon \mathscr{C}\to\mathscr{N}(\mathscr{D})$ such that $\alpha(H)\colon \mathscr{C}\to\mathscr{D}$ is equal to $f$, and $\omega(H)\colon\mathscr{C}\to\mathscr{D}$ is equal to $g$.
}
\end{defn}

It is clear that $f\simeq g\Rightarrow f\simeq_E g$. This implication is strict [Mi, Remark 2.9]. 
The matter is that for each object $c\in\mathscr{C}$ the ``length'' (the minimal number $m$ in Definition \ref{deffin} applied to $H(c)$ is a function on $\mathscr{C}$. If $\mathscr{C}$ has infinitely many objects, this function may be not bounded, considered as a function $\Ob\mathscr{C}\to\mathbb{N}$.

The main result [Mi, Remark 2.11] which we use here is the following:
\begin{prop}\label{mitheorem}
For functors $f,g\colon\mathscr{C}\to\mathscr{D}$ between small categories there are the following (strict) implications:
$$
f\simeq g\Rightarrow f\simeq_E g \Rightarrow f\Rightarrow_B g
$$
\end{prop}

\qed

The link with our problem is the following.

For any fixed $n$, Proposition \ref{prope2} gives $\underline{p}_0\simeq \id_{\Lambda_n(\mathscr{D})}$, in the sense of the strongest homotopy relation $\simeq$. Then our discussion below Corrolary \ref{corre} shows that the same construction as in Proposition \ref{prope2} gives a functor
$$
H\colon \Lambda_*(\mathscr{D})\to\mathscr{N}(\Lambda_*(\mathscr{D}))
$$
such that $\alpha(H)=\underline{p}_0$, $\omega(H)=\id_{\Lambda_*(\mathscr{D})}$.

That is, $\underline{p}_0\simeq_E \id_{\Lambda_*(\mathscr{D})}$, in the sense of the (weaker than $\simeq$) homotopy relation $\simeq_E$.

\begin{theorem}\label{itheorem1}
For any small category $\mathscr{D}$, the topological space $B(\Lambda_*(\mathscr{D}))$ is homotopy equivalent to the topological space $B(\mathscr{D})$. This homotopy equivalence is induced by the functor $i\colon \mathscr{D}\to\Lambda_*(\mathscr{D})$ assigning to $Y\in \mathscr{D}$ the object $([0],Y)$ of length 0. Its homotopy inverse is induced by the functor assigning to $([n],\underline{Y})$ its 0-component $Y_0$.
\end{theorem}
\begin{proof}
We define embedding $i\colon \mathscr{D}\to\Lambda_*(\mathscr{D})$ sending $Y$ to $([0],Y)$. Denote by $p\colon \Lambda_*(\mathscr{D})\to \mathscr{D}$ the functor assigning to $([n],\underline{Y})$ its 0-part $Y_0$. 
Our previous discussion shows that $\id_{\Lambda_*(\mathscr{D})}\simeq_E (i\circ p)$. It implies by Proposition \ref{mitheorem} that
$B(\id), B(i)\circ B(p)\colon B(\Lambda_*(\mathscr{D}))\to B(\Lambda_*(\mathscr{D}))$ are homotopic maps of topological spaces. 
But $p\circ i=\id_{\mathscr{D}}$, what gives another (identical) homotopy.
\end{proof}

\subsection{\sc The Evrard homotopy fibrant replacement of a functor}
For a functor $f\colon\mathscr{C}\to\mathscr{D}$, define the category $\mathscr{H}_*(f)$ by the following pullback diagram:
\begin{equation}
\xymatrix{
\mathscr{H}_*(f)\ar[r]^{\hat{f}}\ar[d]&\Lambda_*\mathscr{D}\ar[d]^{p_0}\\
\mathscr{C}\ar[r]^{f}&\mathscr{D}
}
\end{equation}
(Recall that the subscript $*$ means either $\str$ or $\le$, see Section \ref{sectionpath}).
The functor $p_1\colon\Lambda_*\mathscr{D}\to\mathscr{D}$ defines the composition $\mathscr{H}_*(f)\to\Lambda_*\mathscr{D}\xrightarrow{p_1}\mathscr{D}$ which is denoted by $f_h$.

Explicitly, an object of $\mathscr{H}_*(f)$ is a triple $(X,[n],\underline{Y})$ with $X\in\mathscr{C}$, $\underline{Y}\in\Lambda_n\mathscr{D}$, and with $\bar{Y}_0=f(X)$. Then $p_1$ assigns to $(X,[n],\underline{Y})$ the rightmost object $\bar{Y}_n$  in the string $\underline{Y}$.

There are functors $q:\mathscr{H}_*(f)\to\mathscr{C}$, which assigns to $(X,\underline{Y})$ the object $X$, and $i\colon \mathscr{C}\to\mathscr{H}(f)$, which assigns to $X$ the pair $(X,\underline{f(X)})$, where $\underline{f(X)}$ is of length 0 whose only component is $f(X)$.

(There is a dual construction defined as the composition $\mathscr{H}^*(f)\to \Lambda^*(\mathscr{D})\xrightarrow{q_0}\mathscr{D}$, which is denoted by $f^h$).

\begin{prop}\label{prope3}
Let $f\colon\mathscr{C}\to\mathscr{D}$ be a functor. Then
the functors $i$ and $q$ induce homotopy equivalences $B(\mathscr{C})\rightleftarrows B(\mathscr{H}_*(f))$, homotopy inverse to each other. Therefore, the categories $\mathscr{C}$ and $\mathscr{H}(f)$ are $\simeq_B$-homotopy equivalent, see Section \ref{diffhom}.
\end{prop}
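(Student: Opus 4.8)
The plan is to exhibit the two composites as an inverse pair. First I would note that $q\circ i=\id_{\mathscr C}$ holds strictly: $q(i(X))=q(X,\underline{f(X)})=X$, and similarly on morphisms, so $Bq\circ Bi=\id_{B\mathscr C}$. Hence the whole statement reduces to proving $i\circ q\simeq\id_{\mathscr H(f)}$. Unwinding the definitions, $i\circ q$ sends an object $(X,[n],\underline Y)$ to $(X,\underline{f(X)})$; that is, it leaves the $\mathscr C$-coordinate $X$ untouched and replaces the zig-zag $\underline Y$ by the constant string at its left-hand endpoint $\bar Y_0=f(X)$. This is exactly the ``collapse to the start point'' operation $\underline p_0$ of Proposition \ref{prope2}, now performed inside the pull-back $\mathscr H(f)$.

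The natural idea is therefore to transport the Evrard homotopy built in the proof of Proposition \ref{prope2} into $\mathscr H(f)$. For a fixed object $(X,[n],\underline Y)$ the contracting zig-zag $\underline Y_{(0)}\to\underline Y^{(1)}\leftarrow\underline Y_{(1)}\to\cdots\leftarrow\underline Y_{(n)}=\underline Y$ has the crucial feature that every intermediate path shares the same left endpoint $\bar Y_0=f(X)$; pairing each of them with the fixed object $X$, the whole zig-zag lives in $\mathscr H(f)$, starting at $(X,\underline{f(X)})=i(q(X,[n],\underline Y))$ and ending at $(X,[n],\underline Y)$. This construction is manifestly natural in $\underline Y$ and compatible with the structure maps $\Lambda(\phi)$ of the free path category, so by the lax-colimit description of the Grothendieck construction (Proposition \ref{colaxlimit}) these per-object zig-zags assemble into a single functor $\mathbf H\colon\mathscr H(f)\to\Lambda\mathscr H(f)$ with $p_0\circ\mathbf H=i\circ q$ and $p_1\circ\mathbf H=\id_{\mathscr H(f)}$, where now $p_0,p_1$ denote the endpoint projections of $\Lambda\mathscr H(f)$.

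The main obstacle is precisely that the length $n$ of the contracting path depends on the object: unlike in Proposition \ref{prope2}, where the ambient category $\Lambda_n\mathscr D$ carries a single fixed length, one cannot realise $i\circ q\simeq\id$ by a fixed-$n$ Evrard homotopy, which is why $\mathbf H$ must be allowed to land in the full path category $\Lambda\mathscr H(f)$ and why one must check with some care that $\mathbf H$ respects composition across the $\Delta_\str$-morphisms that change $n$. To finish it then suffices to know that the two endpoint projections become homotopic after realisation, $Bp_0\simeq Bp_1\colon B\Lambda\mathscr H(f)\to B\mathscr H(f)$; granting this, $B(i\circ q)=Bp_0\circ B\mathbf H\simeq Bp_1\circ B\mathbf H=\id_{B\mathscr H(f)}$, which together with $Bq\circ Bi=\id$ exhibits $Bi$ and $Bq$ as mutually inverse homotopy equivalences. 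The relation $Bp_0\simeq Bp_1$ follows formally: by the Corollary stated after Proposition \ref{prope2} (applied to $\mathscr H(f)$) the constant-path inclusion $c\colon\mathscr H(f)\to\Lambda\mathscr H(f)$ is a homotopy equivalence with homotopy inverse $p_0$, so $Bc\circ Bp_0\simeq\id$; since moreover $p_0\circ c=p_1\circ c=\id_{\mathscr H(f)}$, one gets $Bp_1\simeq Bp_1\circ(Bc\circ Bp_0)=(Bp_1\circ Bc)\circ Bp_0=Bp_0$.
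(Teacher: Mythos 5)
Your route is, at bottom, the same as the paper's: the paper's entire proof reads ``$q\circ i=\id$, and $i\circ q$ is a homotopy equivalence by repeating the construction used in the proof of Proposition \ref{prope2}''. Where you go beyond the paper is in noticing that this repetition is not innocent: the contracting zig-zag attached to $(X,[n],\underline{Y})$ has length $n$, which varies with the object, so the assembled homotopy can only land in the full path category $\Lambda\mathscr{H}(f)$ and not in any $\Lambda_N\mathscr{H}(f)$ with $N$ fixed. Hence it is not an Evrard homotopy in the sense of Definition \ref{defhe}, and Proposition \ref{prope1} (whose proof decomposes a functor to $\Lambda_n$ into $2n+1$ functors and natural transformations) cannot be invoked. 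Isolating this difficulty, which the paper's one-sentence proof passes over in silence, is the correct reading of the situation.

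Your patch, however, does not close this gap so much as relocate it, in two respects. First, the assembly of $\mathbf{H}$ is not ``manifest''. For a morphism of $\mathscr{H}(f)$ lying over $\phi\colon[n]\to[m]$ with $n<m$, the component of $\mathbf{H}$ at a bar position $j$ must be a morphism $(X,[n],\underline{Y}_{(k)})\to(X',[m],\underline{Y}'_{(j)})$ where $\phi(k)\le j<\phi(k+1)$, and truncations at different levels admit, in general, no morphisms built from level-to-level data alone: one seems to need lifts along the backward arrows $\bar{Y}'_p\to Y'_p$, which do not exist. The construction is saved only by the fact that a morphism $t\colon\Lambda(\phi)(\underline{Y})\to\underline{Y}'$ in the Grothendieck construction also has components on the identity padding of $\Lambda(\phi)(\underline{Y})$, and precisely these supply the required maps from the stabilized tail of $\underline{Y}_{(k)}$ into the still-active part of $\underline{Y}'_{(j)}$; this verification (conveniently organized via Proposition \ref{colaxlimit}, as you suggest) is the real content of the proof and must be written out. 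Second, and more seriously, your final step invokes the Corollary following Proposition \ref{prope2} for the \emph{full} path category, and in the strong form that the constant-path embedding $c$ (equivalently $p_0$) is itself a homotopy equivalence. The paper states that Corollary without proof, and its $\Lambda\mathscr{D}$-part does \emph{not} follow from Proposition \ref{prope2}: it suffers from exactly the same varying-length problem you are circumventing. Nor can your scheme establish it: proving $c\circ p_0\simeq\id_{\Lambda\mathscr{D}}$ by your method would require the same Corollary for $\Lambda(\Lambda\mathscr{D})$, and so on, a regress. A non-circular proof needs genuinely different input, e.g.\ Thomason's homotopy colimit theorem over the (contractible) index category $\Delta_\str$, or a filtration of $\Lambda\mathscr{D}$ by path length together with a telescope argument. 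So, granting the paper's Corollary in the strengthened form, your argument is complete and is an honest improvement on the paper's proof; as a self-contained argument it still has a hole at exactly the point where the paper has one.
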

\begin{proof}
We have $q\circ i=\id$, and the composition $i\circ q$ is the functor assigning to $(X,[n],\underline{Y})$ the object $(X,[0],f(X))$.
We prove that it is Evrard homotopic to the identity functor of $\mathscr{H}_*(f)$, following the same line as in Proposition \ref{prope2}. Then we apply Proposition \ref{mitheorem}, as in the proof of Theorem \ref{itheorem1}. 
\end{proof}
Consider the functor $f_h\colon\mathscr{H}_*(f)\to \mathscr{D}$ defined above. 

The case $*=\str$ of the following Theorem is partially contained in the Thesis of Evrard [Ev1]. Our proof works simultaneously for both cases and yields a more precise homotopical information than Evrard's treatment [Ev1, Ev2].
\begin{theorem}\label{theoreme}
Any functor $f\colon \mathscr{C}\to\mathscr{D}$ admits a (natural) factorization into a homotopy equivalence $i_f\colon \mathscr{C}\to\mathscr{H}_*(f)$ followed by an $h$-fibred functor $f_h\colon \mathscr{H}_*(f)\to\mathscr{D}$. 

Dually, the functor $f$ also admits a (natural) factorization into a homotopy equivalence $j_f\colon \mathscr{C}\to\mathscr{H}^*(f)$ followed by an $h$-cofibred functor $f^h\colon \mathscr{H}^*(f)\to\mathscr{D}$.

In particular, the functor $f_h$ (resp., $f^h$) fulfills the assumption (resp. the dual assumption) of Quillen Theorem B (cf. Theorem \ref{qtheoremb} and Proposition \ref{propm}).
\end{theorem}
We prove Theorem \ref{theoreme} in the next Section.

\begin{remark}{\rm
In the extended electronically available version of [Ma], Maltsiniotis proves the existence of factorization which is different but related to ours, see [Ma, Theorem 3.2.45].
}
\end{remark}
\section{\sc Proof of Theorem \ref{theoreme}}
We provide a proof for the case of $f_h$, the interested reader is invited to carry out the dual part of Theorem \ref{theoreme} just by dualizing the proofs involving $f_h$.

In Section \ref{sectioni1}, we prove that $f_h$ is a fibred functor.
Then in Section 3.2 we show that $f_h$ is $h$-fibred, i.e. the induced base-change functors are weak homotopy equivalences.

We discuss in parallel the cases $*=\str$ and $*=\le$, though in a few place we omit some detail for $*=\str$ to avoid long routine but straightforward computations. 
\subsection{\sc $f_h$ is a fibred functor}\label{sectioni1}
Let $f\colon\mathscr{C}\to\mathscr{D}$ be a functor. Consider $f_h\colon\mathscr{H}_*(f)\to\mathscr{D}$.
\begin{lemma}
The functor $f_h$ is pre-fibred (and fibred). 
\end{lemma}
\begin{proof}
Recall the natural embedding $i_Y\colon f_h^{-1}(Y)\to Y\setminus f_h$, see Section \ref{sectionq}. To see that $f_h$ is pre-fibred, we need to show {\it that $i_Y$ admits a right adjoint}.

An element of the category $Y\setminus f_h$, $Y\in\mathscr{D}$ is a tuple
$$
\omega=(X,\underline{Y}_n, \sigma) \text{  where  }X\in\mathscr{C},\underline{Y}_n\in \Lambda_n(\mathscr{D}), \bar{Y}_0=f(X), \sigma\colon Y\to \bar{Y}_n=f_h(\underline{Y}_n)
$$
We use notations as in \eqref{notpath} for an element in $\Lambda_n(\mathscr{D})$, that is, $\underline{Y}_n$ is of the form
\begin{equation}\label{ww}
\bar{Y}_0\rightarrow Y_1\leftarrow \bar{Y}_1\rightarrow Y_2\leftarrow\bar{Y}_2\rightarrow Y_3\leftarrow \dots\rightarrow Y_n\xleftarrow{a}\bar{Y}_n
\end{equation}
Define the right adjoint $R_Y$ to $i_Y$ on the object $\omega$ as $R_Y(\omega)=(X,\underline{Z}_n)$ where $\underline{Z}_n\in\Lambda_n(\mathscr{D})$ is the following string:
\begin{equation}
\bar{Y}_0\rightarrow Y_1\leftarrow \bar{Y}_1\rightarrow Y_2\leftarrow\bar{Y}_2\rightarrow Y_3\leftarrow \dots\rightarrow Y_n\xleftarrow{a\circ \sigma}{Y}
\end{equation}
To define $R_Y$ on morphisms, let $\Phi\colon \omega\to\omega^\prime$ be a morphism in $Y\setminus f_h$.
It means that we are given another object
$$
\omega^\prime=(X^\prime\in\mathscr{C},\underline{Y}_m^\prime\in\Lambda_m(\mathscr{D}),
\bar{Y}^\prime_0=f(X^\prime), \sigma^\prime\colon Y\to \bar{Y}^\prime_m)
$$
in $Y\setminus f_h$, and morphisms
$$
s\colon X\to X^\prime, w\colon \Lambda(\phi)(\omega)\to\omega^\prime
$$
such that $w_0=f(s)$ and 
\begin{equation}\label{eqw0}
\bar{w}_m\circ \sigma=\sigma^\prime
\end{equation}
Note that in general $m\ne n$, $\phi\colon [n]\to [m]$ is a morphism in $\Delta_\str$ for $*=\str$ and in $\Delta_\le$ for $*=\le$; one always has $m\ge n$. Here $w\colon \Lambda(\phi)(\omega)\to\omega^\prime$ is a morphism in $\Lambda_m(\mathscr{D})$. 

The corresponding morphism $R_Y(s,w)\colon R_Y(\omega)\to R_Y(\omega^\prime)$ is defined as 
$$
(s\colon X\to X^\prime, R_Y(w)\colon \Lambda(\phi)(R_Y(\omega))\to R_Y(\omega^\prime))
$$
To simplify the notations, we construct $R_Y(w)$ only for the case $*=\le$; the case $*=\str$ is similar and is left to the reader.
So we let $*=\le$, $m-n=k\ge 0$, there is only one morphism $n\to m$ in $\Delta_{\le}$. We use the following notations for the morphism $w\colon \Lambda(\phi)(\omega)\to\omega^\prime$:
\begin{equation}\label{eqw1}
\xymatrix{
\dots\bar{Y}_{n-1}\ar[r]\ar[d]^{\bar{w}_{n-1}}   &Y_{n}\ar[d]^{w_m} &\bar{Y}_n\ar[l]_{a}\ar[d]^{\bar{w}_m}\\
\dots\bar{Y}_{m-1}^\prime\ar[r]&Y^\prime_m&\bar{Y}_m\ar[l]_{a^\prime}
}
\end{equation}
Define $R_Y(w)\colon \Lambda(\phi)(R_Y(\omega))\to R_Y(\omega^\prime))$ as
\begin{equation}\label{eqw2}
\xymatrix{
\dots\bar{Y}_{n-1}\ar[r]\ar[d]^{\bar{w}_{m-1}} &Y_{n}\ar[d]^{w_m} &Y\ar[l]_{a\circ \sigma} \ar[d]^{\id}\\
\dots\bar{Y}_{m-1}\ar[r]&Y_m^\prime&Y\ar[l]_{a^\prime\circ \sigma^\prime}
}
\end{equation}
To see that the definition is correct, we only need to check that the rightmost square in \eqref{eqw2} is commutative. It follows from the commutativity of the rightmost square in \eqref{eqw1} and from \eqref{eqw0}.

Now check that the functor $R_Y$ is right adjoint to $i_Y$ (again in the case $*=\le$).

Let $\omega\in Y\setminus f_h$ as above, and let $\eta=(X,\underline{Z}_\ell)$, $Z_0=f(X),\bar{Z}_\ell=Y$, be an element in $f_h^{-1}(Y)$. We need to bi-functorially identify
\begin{equation}\label{eqw3}
\Hom_{f_h^{-1}(Y)}(\eta,R_Y(\omega))\simeq\Hom_{Y\setminus f_h}(i_Y(\eta),\omega)
\end{equation}
We have $n\ge \ell$, otherwise both sides of \eqref{eqw3} are empty sets. We track the rightmost square in both sides. They are
\begin{equation}\label{eqw4}
\xymatrix{
\dots Z_\ell\ar[d]^{w_n}&Y\ar[l]_{b}\ar[d]^{\id}\\
\dots Y_n&Y\ar[l]_{a\circ \sigma}
}
\end{equation}
for the l.h.s. of \eqref{eqw3}, and
\begin{equation}\label{eqw5}
\xymatrix{
\dots Z_\ell\ar[d]^{w_n}&Y\ar[l]_{b}\ar[d]^{\bar{w}_n}\\
\dots Y_n&\bar{Y}_n\ar[l]_{a}
}
\end{equation}
with 
\begin{equation}\label{eqw6}
\bar{w}_n=\bar{w}_n\circ \id_Y=\sigma
\end{equation}
for the r.h.s. of \eqref{eqw3}.

It is clear that there is 1-to-1 correspondence between diagrams \eqref{eqw4} and diagrams \eqref{eqw5} with $\sigma=\bar{w}_n$.

It proves that the functor $f_h$ is pre-fibred. In fact, it is fibred (though we don't use it), see Section \ref{sectioni2} below.
\end{proof}

\subsection{\sc The base-change functor}\label{sectioni2}
With any morphism $v\colon Y\to Y^\prime$ in $\mathscr{D}$ is associated the base-change morphism 
$$
v^*=R_Y\circ [v^*]\circ i_Y\colon f_h^{-1}(Y^\prime)\to f_h^{-1}(Y)
$$
see \eqref{bc}.

In our case, the base-change morphism is just the pre-composition of the rightmost map $a\colon Y^\prime=\bar{Y}_n\to Y_n$ in in the string \eqref{ww} with  $v\colon Y\to Y^{\prime}$. Thus, it sends $\omega=(X,\underline{Y}_n)$ with $\underline{Y}_n$ as
$$
\bar{Y}_0\rightarrow Y_1\leftarrow \bar{Y}_1\rightarrow Y_2\leftarrow\bar{Y}_2\rightarrow Y_3\leftarrow \dots\rightarrow Y_n\xleftarrow{a}Y^\prime
$$
to
$$
\bar{Y}_0\rightarrow Y_1\leftarrow \bar{Y}_1\rightarrow Y_2\leftarrow\bar{Y}_2\rightarrow Y_3\leftarrow \dots\rightarrow Y_n\xleftarrow{a\circ v}Y
$$
and is extended to morphisms in a natural way.

Note that for two composable morphisms $v_1$ and $v_2$ in $\mathscr{D}$, one has 
\begin{equation}\label{wf}
(v_2\circ v_1)^*=v_1^*\circ v_2^*
\end{equation}
what can be seen from the description of $v^*$ given above. In general, the two sides of \eqref{wf} are only isomorphic; a pre-fibred functor for which one can choose representatives for all right adjoints $R_Y$ (any two representatives are isomorphic) such that \eqref{wf} holds on the nose (not just up to an isomorphism) is called {\it fibred}.

We have:
\begin{lemma}\label{theoremee}
In the notations as above, any base-change functor
$v^*\colon f_h^{-1}(Y^\prime)\to f_h^{-1}(Y)$ is a weak homotopy equivalence of categories.
\end{lemma}

{\it Proof of the Lemma:}

With a morphism $v\colon Y\to Y^\prime$, we associated the base-change functor $v^*\colon f_h^{-1}(Y^\prime)\to f_h^{-1}(Y)$.
Note that $v^*$ is well-defined as well on the strings $\underline{Y}_n$ of fixed length $n$. Our strategy is as follows:
\begin{itemize}
\item[(i)] We construct a functor $v_\dagger\colon f_h^{-1}(Y)\to f_h^{-1}(Y^\prime)$, associated with the same morphism $v\colon Y\to Y^\prime$ in $\mathscr{D}$. It increases the length of string $\underline{Y}_n$ by 1, so it can not be specified for fixed $n$, but it is defined as a functor between the corresponding Grothendieck constructions;
\item[(ii)] We construct a shift functor $T\colon f_h^{-1}(Y)\to f_h^{-1}(Y)$ increasing the length of a string $\underline{Y}_n$ by 1. We prove that $T$ is weak homotopy equivalent to the identity functor, on the level of Grothendieck constructions;
\item[(iii)] We construct natural transformations 
$$
A\colon T\to v^*v_\dagger
$$
and
$$
B\colon v_\dagger v^*\to T
$$
\end{itemize}
Then Theorem \ref{theoremee} will follow, by Proposition \ref{propbasic}.

\subsubsection{\sc The shift functor}
Define the functor $T_n\colon \Lambda_n(\mathscr{D})\to\Lambda_{n+1}(\mathscr{D})$ as
\begin{equation}
T_n(\underline{Y})=\dots\leftarrow Y_{n-1}\rightarrow\bar{Y}_{n-1}\leftarrow Y_n\rightarrow\bar{Y}_n\xleftarrow{\id}\bar{Y}_n\xrightarrow{\id}\bar{Y}_n
\end{equation}
where $\underline{Y}\in\Lambda_n(\mathscr{D})$.

The functors $T_n$ define a functor $T\colon \Lambda_*(\mathscr{D})\to \Lambda_*(\mathscr{D})$, for both cases $*=\str$ and $*=\le$, by Lemma \ref{ntgr}(2).
For a functor $f\colon \mathscr{C}\to\mathscr{D}$, the functors $T_n$ define a functor
$T\colon \mathscr{H}_*(f)\to\mathscr{H}_*(f)$, by
\begin{equation}
T(X,[n],\underline{Y})=(X,[n+1],T_n(\underline{Y}))
\end{equation}
The functor $T$ preserves the category $f_h^{-1}(Y)$ for any $Y\in\mathscr{D}$. We denote the restriction of $T$ to $f_h^{-1}(Y)$ by $T_Y$.
One has
\begin{prop}\label{lemmae1}
Let $f\colon\mathscr{C}\to\mathscr{D}$ be a functor. Then $T\colon\mathscr{H}_*(f)\to\mathscr{H}_*(f)$ is a weak homotopy equivalence, and is homotopy equivalent to the identity functor.  The corresponding functor
$T_Y\colon f_h^{-1}(Y)\to f_h^{-1}(Y)$ as well is a homotopy equivalence, homotopy equivalent to the identity functor.
\end{prop}
\begin{proof}
Let $\underline{Y}_n\in\Lambda_n(\mathscr{D})$, consider $T(\underline{Y}_n)\in\Lambda_{n+1}(\mathscr{D})$.
Denote by $j\colon n\to n+1$ the morphism in $\Delta_{\le}$. We construct functors
$$
T_0,T_1,\dots,T_{2n}\colon\Lambda_n(\mathscr{D})\to \Lambda_{n+1}(\mathscr{D})
$$
with
$$
T_0=\Lambda(j),\ \ T_{2n}=T
$$
and a zig-zag of maps of functors
$$
T_0\xrightarrow{\theta_0}T_1\xleftarrow{\theta_1}T_2\xrightarrow{\theta_2}T_3\dots   T_{2n-2} \xrightarrow{\theta_{2n-2}}T_{2n-1}\xleftarrow{\theta_{2n-1}} T_{2n}
$$
For even $i=2k$, $T_{2k}(\underline{Y}_n)\in\Lambda_{n+1}(\mathscr{D})$ is
$$
\bar{Y}_0\rightarrow Y_1\leftarrow \bar{Y}_1\rightarrow\dots       \rightarrow Y_k \leftarrow  \bar{Y}_{k} \xrightarrow{\id} \bar{Y}_{k}\xleftarrow{\id}  \bar{Y}_{k} \rightarrow Y_{k+1}\leftarrow \bar{Y}_{k+1}\rightarrow\dots\rightarrow Y_n\leftarrow \bar{Y}_n
$$
(that is, two extra $\bar{Y}_k$ are inserted to the string, and the new arrows are the identity maps). 

For odd $i=2k+1$, $T_{2k+1}(\underline{Y}_n)\in\Lambda_{n+1}(\mathscr{D})$ is
$$
\bar{Y}_0\rightarrow Y_1\leftarrow \bar{Y}_1\rightarrow\dots  \leftarrow \bar{Y}_k\rightarrow Y_{k+1}\xleftarrow{\id}Y_{k+1}\xrightarrow{\id}Y_{k+1}\leftarrow\bar{Y}_{k+1}\rightarrow Y_{k+2}\leftarrow\dots\rightarrow Y_n\leftarrow \bar{Y}_n
$$
(that is, two extra ${Y}_{k+1}$ are inserted to the string, and the two new arrows are the identity maps). 

We construct the two ``rightmost'' maps of functors, $\theta_{2n-1}$ and $\theta_{2n-2}$, the remaining maps are similar. These two rightmost maps are:
\begin{equation}
\xymatrix{
T_{2n}\ar[d]_{\theta_{2n-1}}&&\dots\ar[r] & Y_{n-1}\ar[d]^{\id}&\bar{Y}_{n-1}\ar[r]^{b}\ar[l]_{c}\ar[d]^{\id}&Y_n\ar[d]^{\id}&\bar{Y}_n\ar[r]^{\id}\ar[l]_{a}\ar[d]^{a}  &\bar{Y}_n\ar[d]^{a} & \bar{Y}_n\ar[l]_{\id}\ar[d]^{\id}\\
T_{2n-1}&&\dots\ar[r]&Y_{n-1}&\bar{Y}_{n-1}\ar[l]_{c}\ar[r]^{b}&Y_n&Y_n\ar[l]_{\id}\ar[r]^{\id}&Y_n&\bar{Y}_n\ar[l]_{a}\\
T_{2n-2}\ar[u]^{\theta_{2n-2}}&&\dots\ar[r]&Y_{n-1}\ar[u]_{\id}&\bar{Y}_{n-1}\ar[l]_{c}\ar[r]^{\id}\ar[u]_{\id}&
\bar{Y}_{n-1}\ar[u]_{b}&\bar{Y}_{n-1}\ar[u]_{b}\ar[l]_{\id}\ar[r]^{b}&Y_n\ar[u]_{\id}&\bar{Y}_n\ar[l]_{a}\ar[u]_{\id}
}
\end{equation}
The same formulas define a zig-zag of homotopy equivalences  for $\mathscr{H}_*(f)$.

We also have a morphism $\Lambda(j)(\underline{Y})\to\underline{Y}$ in the cofibrant Grothendieck construction, which gives rise to a map of functors $T_0\to\id$ in $\Delta_*\int_c\Lambda$, and to a map of functors $T_0\to\id$ in $\mathscr{H}_*(f)$. 

That is, we have the following zig-zag of maps of functors in $\mathscr{H}_*(f)$:
$$
\id\leftarrow T_0\xrightarrow{\theta_0}T_1\xleftarrow{\theta_1}T_2\xrightarrow{\theta_2}T_3\dots   T_{2n-2} \xrightarrow{\theta_{2n-2}}T_{2n-1}\xleftarrow{\theta_{2n-1}} T_{2n}
$$
Now the argument analogous to the one in Section \ref{diffhom} shows that $T$ is $\simeq_E$-homotopic to the identity functor. It implies, by Proposition \ref{mitheorem}, that $T$ and $\id$ define homotopic maps on $B(\mathscr{H}_*(f))$.

\comment
We construct a natural transformation $\id_{\mathscr{H}(f)}\to T_f$, then the result follows from Proposition \ref{propbasic}(i).
That is, we need to define for any $(X,[n],\underline{Y})$ a morphism in $\mathscr{H}(f)$
$$
\theta:(X,[n],\underline{Y})\to (X,[n+1], T_n(\underline{Y}))
$$
such that for any morphism $g$ in $\mathscr{H}(f)$ the diagram
\begin{equation}\label{e1}
\xymatrix{
(X,[n],\underline{Y})\ar[r]^{\theta}\ar[d]_{g}&(X,[n+1],T_n(\underline{Y}))\ar[d]^{g}\\
(X^\prime,[m],\underline{Y}^\prime)\ar[r]^{\theta}&(X^\prime,[m+1],T_m(\underline{Y}^\prime))
}
\end{equation}
commutes.

We define the component $\theta([n])$ as the map $\theta_n: [n]\to [n+1]$ with $\theta_n(i)=i$, $1\le i\le n$. Then the corresponding map
$\Lambda(\theta_n)(\underline{Y})\to T_n(\underline{Y})$ is defined as
the identity map (note that $\Lambda(\theta_n)(\underline{Y})=T_n(\underline{Y})$).
The commutativity of \eqref{e1} for any $g$ is clear.
\endcomment
\end{proof}

\subsubsection{\sc The functor $v_\dagger$}
Let $v\colon Y\to Y^\prime$ be a morphism in $\mathscr{D}$. 
We construct a functor 
$$
v_\dagger\colon f_h^{-1}(Y)\to f_h^{-1}(Y^\prime)
$$
on an object
$$
\dots\rightarrow Z_{n-1}\leftarrow\bar{Z}_{n-1}\rightarrow Z_n\xleftarrow{b}\underset{=Y}{\bar{Z}_n}
$$
as
$$
\dots\rightarrow Z_{n-1}\leftarrow\bar{Z}_{n-1}\rightarrow Z_n\xleftarrow{b}\underset{=Y}{\bar{Z}_n}\xrightarrow{v}Y^\prime\xleftarrow{\id}Y^\prime
$$
The Grothendieck construction gives then a functor $v_\dagger$.

\begin{prop}\label{propx}
Let $v\colon Y\to Y^\prime$ be a morphism in $\mathscr{D}$. There are maps of functors
$$
A\colon T\Rightarrow v^*v_\dagger\colon f_h^{-1}(Y)\to f_h^{-1}(Y)
$$
and
$$
B\colon v_\dagger v^*\Rightarrow T\colon f_h^{-1}(Y^\prime)\to f_h^{-1}(Y^\prime)
$$
\end{prop}
\begin{proof}
Denote by $f_h^{-1}(Y)[n]$ be the category of the corresponding strings of length $n$, thus 
$$
f_h(Y)=\Delta_*\int_c f_h^{-1}(Y)[n]
$$
We construct maps of functors
$$
A[n]\colon T\Rightarrow v^*v_\dagger\colon f_h^{-1}(Y)[n]\to f_h^{-1}(Y)[n+1]
$$
and
$$
B[n]\colon v_\dagger v^*\Rightarrow T\colon f_h^{-1}(Y^\prime)[n]\to f_h^{-1}(Y^\prime)[n+1]
$$
Then the result will follow from Lemma \ref{ntgrbis}.

For fixed $n$, $A[n]$ and $B[n]$ are given by the following diagrams, correspondingly:

\begin{equation}
\xymatrix{
\dots\ar[r]&Z_n&{\overset{=Y}{\bar{Z}_n}}\ar[l]_{b}\ar[r]^{v}&Y^\prime&Y\ar[l]_{v}\\
\dots\ar[r]&Z_n\ar[u]^{\id}&{\underset{=Y}{\bar{Z}_n}}\ar[l]_{b}\ar[u]^{\id}\ar[r]^{\id}&Y\ar[u]^{v}&Y\ar[l]_{\id}\ar[u]^{\id}
}
\end{equation}
\begin{equation}
\xymatrix{
\dots\ar[r]&Y_n\ar[d]_{\id}&{\overset{=Y^\prime}{\bar{Y}_n}}\ar[l]_{av}\ar[d]_{v}\ar[r]^{v}&Y^{\prime}\ar[d]_{\id}&Y^{\prime}\ar[l]_{\id}\ar[d]_{\id}\\
\dots\ar[r]&Y_n&{\underset{=Y^\prime}{\bar{Y}_n}}\ar[l]_{a}\ar[r]^{\id}&Y^\prime&Y^\prime\ar[l]_{\id}
}
\end{equation}

One easily checks that both $A[n]$ and $B[n]$ form a 3-arrow, when $n$ runs through the objects of the category $\Delta_*$ (see the definition just before Lemma \ref{ntgrbis}), thus Lemma \ref{ntgrbis} applies.

We are done.
\end{proof}

Lemma \ref{theoremee} is proven.

\qed

Now Theorem \ref{theoreme} follows from Propositions \ref{mitheorem}, \ref{lemmae1}, and \ref{propx}.

\qed

\comment
The functor $u_\dagger$ assigns to $\underline{Y}\in \Lambda_n(\mathscr{D})$ of the form
\begin{equation}\label{ee1}
 \dots \rightarrow \bar{Y}_{n-1}    \leftarrow     Y_n \xrightarrow{\alpha}  \underset{=Y}{\bar{Y}_n}
\end{equation}
the element
\begin{equation}
 \dots \rightarrow \bar{Y}_{n-1}    \leftarrow     Y_n \xrightarrow{u\circ\alpha}  Y^\prime
\end{equation}
in $\Lambda_n(\mathscr{D})$. It defines naturally a functor $u_\dagger\colon f_h^{-1}(Y)\to f_h^{-1}(Y^\prime)$.
The functor $u^\dagger$ assigns to
$\underline{Z}\in \Lambda_n(\mathscr{D})$ of the form
\begin{equation}\label{ee2}
 \dots \rightarrow \bar{Z}_{n-1}    \leftarrow     Z_n \xrightarrow{\beta}  \underset{=Y^\prime}{\bar{Z}_n}
\end{equation}
the following element in $\Lambda_{n+1}(\mathscr{D})$:
\begin{equation}
 \dots \rightarrow \bar{Z}_{n-1}    \leftarrow     Z_n \xrightarrow{\beta}  \underset{=Y^\prime}{\bar{Z}_n}\xleftarrow{u}Y\xrightarrow{\id}Y
\end{equation}
It defines naturally a functor $u^\dagger\colon f_h^{-1}(Y^\prime)\to f_h^{-1}(Y)$.

\begin{lemma}\label{lemmae2}
There are natural transformations $\theta_1:\id_{f_h^{-1}(Y)}\to u^\dagger\circ u_\dagger$ and
$\theta_2:u_\dagger\circ u^\dagger\to T_f\circ \id_{f_h^{-1}(Y^\prime)}$.
\end{lemma}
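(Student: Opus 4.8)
The plan is to write down each natural transformation explicitly, as a family of morphisms in $\mathscr{H}(f)$ indexed by the objects of the relevant fibre, and then to verify naturality. Recall that a morphism in $\Lambda\mathscr{D}=\Delta_\str\int\Lambda$ out of $([n],\underline{Y})$ is a pair consisting of a strictly monotone map $\phi$ in $\Delta_\str$ together with a morphism of zig-zags $\Lambda(\phi)(\underline{Y})\to(-)$; a morphism of $\mathscr{H}(f)$ is such a datum together with a compatible arrow of $\mathscr{C}$, and it lies in a set-theoretic fibre precisely when that $\mathscr{C}$-arrow and the component at the final (endpoint) object are identities. Every component on the $\mathscr{C}$-coordinate will be $\id_X$, so the whole construction takes place ``fibrewise''.

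\emph{Construction of $\theta_1$.} Let $(X,[n],\underline{Y})\in f_h^{-1}(Y)$, so that $\underline{Y}$ ends as in \eqref{ee1} with $Y_n\xrightarrow{\alpha}\bar{Y}_n=Y$. Unwinding the definitions, $u^\dagger\circ u_\dagger$ sends this object to $(X,[n+1],\underline{W})$, where $\underline{W}$ is obtained by first replacing the last arrow $\alpha$ with $u\circ\alpha$ (so the $n$-th valley becomes $Y^\prime$) and then appending the segment $Y^\prime\xleftarrow{u}Y\xrightarrow{\id}Y$. I would take $\theta_1$ at this object to be the morphism whose $\Delta_\str$-component is the inclusion $\phi\colon[n]\hookrightarrow[n+1]$, $\phi(i)=i$, which fixes the $p_0$-end and pads an identity segment $Y\xleftarrow{\id}Y\xrightarrow{\id}Y$ onto the $p_1$-end, and whose zig-zag component is the identity on every term except the (now internal) valley $\bar{Y}_n$, where it is $u\colon Y\to Y^\prime$. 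The two squares adjacent to $\bar{Y}_n$ commute (on the left because $u\circ\alpha=u\circ\alpha$, on the right because $u\circ\id=\id\circ u$), and the endpoint component is $\id_Y$, so $\theta_1$ indeed lands in $f_h^{-1}(Y)$.

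\emph{Construction of $\theta_2$.} Let $(X,[n],\underline{Z})\in f_h^{-1}(Y^\prime)$, with $\underline{Z}$ ending as in \eqref{ee2} with $Z_n\xrightarrow{\beta}\bar{Z}_n=Y^\prime$. Then $u_\dagger\circ u^\dagger$ sends it to $(X,[n+1],\underline{W}^\prime)$, where $\underline{W}^\prime$ appends to $\underline{Z}$ the segment $Y^\prime\xleftarrow{u}Y\xrightarrow{u}Y^\prime$ (the first $u$ coming from $u^\dagger$, the second because $u_\dagger$ post-composes the trailing identity with $u$), whereas $T_f$ appends instead $Y^\prime\xleftarrow{\id}Y^\prime\xrightarrow{\id}Y^\prime$. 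Since the indices now agree, I would take $\theta_2$ to have $\Delta_\str$-component $\id_{[n+1]}$ and zig-zag component the identity on the first $n$ old terms and on the final endpoint $Y^\prime$, and the arrow $u\colon Y\to Y^\prime$ on the new peak. Both squares adjacent to that peak commute (each reduces to $\id\circ u=u\circ\id$), and the endpoint component is $\id_{Y^\prime}$, so $\theta_2$ lands in $f_h^{-1}(Y^\prime)$.

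Finally I would check naturality of each $\theta_i$: for an arbitrary morphism $g$ of the fibre the square of the form \eqref{e1} must commute. This is a direct diagram chase, strictly parallel to the construction of the natural transformation $\id\to T_f$ in Lemma \ref{lemmae1}; since every chosen component is either an identity or the single fixed arrow $u$, and these do not interact with the varying data of $g$, commutativity is automatic. The genuinely delicate point is purely combinatorial bookkeeping: one must respect the indexing conventions of \eqref{notpath} and the way $\Lambda(\phi)$ inserts identity segments, as in \eqref{lphi}, so that the sources and targets of the component arrows match and the squares are correctly formed. Once these conventions are pinned down the commutativity checks are immediate, which is why I expect the only real obstacle here to be notational rather than mathematical.
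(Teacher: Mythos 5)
Your construction coincides with the paper's own proof: the components of $\theta_1$ and $\theta_2$ you write down (identity on $X$; the map $\phi\colon[n]\to[n+1]$, $\phi(i)=i$, resp.\ $\id_{[n+1]}$, in $\Delta_\str$; and the zig-zag morphism equal to $u$ at the single relevant term and the identity everywhere else) are exactly the ones given there, with the same commuting-square checks and the same routine naturality verification. One small correction to your setup, which does not affect the argument: a morphism of $\mathscr{H}(f)$ lies in the fibre $f_h^{-1}(Y)$ as soon as its component at the endpoint object is $\id_Y$ --- its $\mathscr{C}$-arrow need not be an identity --- and since your naturality argument never uses that extra restriction, the proof goes through for all fibre morphisms as required.
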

\begin{proof}
For $\underline{Y}\in \Lambda_n(Y)$ and $\underline{Z}\in\Lambda_n(Y^\prime)$ as in \eqref{ee1}, \eqref{ee2}, define $\theta_1(X,[n],\underline{Y})$ and $\theta_2(X,[n],\underline{Z})$ as
follows. The value
$\theta_1(X,[n],\underline{Y})$ is a morphism in $f_h^{-1}(Y)$ from $(X,[n],\underline{Y})$ to $(X,[n+1],u^\dagger\circ u_\dagger(\underline{Y}))$.
Define it as the identity on the component $X$, as the map $\phi\colon [n]\to [n+1]$ in $\Delta_\str$ such that $\phi(i)=i$ for all$i=1\dots n$,
and the component $\Lambda(\phi)(\underline{Y})\to u^\dagger\circ u_\dagger (\underline{Y})$ is given as the following morphism in $\Lambda_{n+1}(Y)$:
\begin{equation}
\xymatrix{
\dots\ar[r]&\bar{Y}_{n-1}\ar[d]^{\id}&Y_n\ar[r]^{\alpha}\ar[l]\ar[d]^{\id}&Y\ar[d]^{u}&Y\ar[l]_{\id}\ar[r]^{\id}\ar[d]^{\id}&Y\ar[d]^{\id}\\
                               \dots\ar[r]       & \bar{Y}_{n-1}        &Y_n\ar[r]^{u\circ\alpha} \ar[l]       &Y^\prime       &Y\ar[r]^{\id}\ar[l]_{u} &Y
}
\end{equation}
The value $\theta_2(X,[n],\underline{Z})$ is a morphism in $f_h^{-1}(Y^\prime)$ from $(X,[n+1],u_\dagger\circ u^\dagger (\underline{Z}))$ to
$(X,[n+1],T_f\circ \id(\underline{Z}))$. It is defined as the identity on $X$, as the identity on $[n+1]$, and the corresponding morphism
$u_\dagger\circ u^\dagger (\underline{Z})\to T_f(\underline{Z})$ in $\Lambda_{n+1}(Y^\prime)$ is given as
\begin{equation}
\xymatrix{
 \dots\ar[r]&\bar{Z}_{n-1}\ar[d]^{\id}&Z_n\ar[r]^{\beta}\ar[l]\ar[d]^{\id}   &Y^{\prime}\ar[d]^{\id}&Y\ar[l]_{u}\ar[r]^{u}\ar[d]^{u}&Y^\prime\ar[d]^{\id}\\
    \dots\ar[r]     &\bar{Z}_{n-1}  &Z_n\ar[r]^{\beta}\ar[l]  &Y^\prime&Y^\prime\ar[r]^{\id}\ar[l]_{\id}    & Y^\prime
}
\end{equation}
In both cases, these definitions are compatible with the morphisms in $f_h^{-1}(Y)$ (corresp., in $f_h^{-1}(Y^\prime)$), and thus define natural transformations.
 \end{proof}

\begin{coroll}\label{corolle1}
For any $u\colon Y\to Y^\prime$ in $\mathscr{D}$, the functors $u_\dagger\colon f_h^{-1}(Y)\to f_h^{-1}(Y^\prime)$ and
$u^\dagger\colon f_h^{-1}(Y^\prime)\to f_h^{-1}(Y)$ are homotopy equivalences, homotopy mutually inverse to each other.
\end{coroll}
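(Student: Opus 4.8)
The plan is to read the corollary off directly from the two natural transformations supplied by Lemma \ref{lemmae2}, combined with the shift-functor computation of Lemma \ref{lemmae1}. The only general principle needed is Proposition \ref{propbasic}(i): a natural transformation between two functors induces a homotopy between the induced maps of classifying spaces, and hence exhibits the two functors as homotopic in the sense of Definition \ref{defhq}.

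First I would apply Proposition \ref{propbasic}(i) to the natural transformation $\theta_1\colon\id_{f_h^{-1}(Y)}\to u^\dagger\circ u_\dagger$ from Lemma \ref{lemmae2}. This immediately gives that $u^\dagger\circ u_\dagger$ is homotopy equivalent to the identity functor of $f_h^{-1}(Y)$, which is one of the two relations required by Definition \ref{defhq}. For the other composition I would apply Proposition \ref{propbasic}(i) to $\theta_2\colon u_\dagger\circ u^\dagger\to T_f$. The point to be careful about is that $\theta_2$ does \emph{not} land at the identity but at the shift functor $T_f$ restricted to $f_h^{-1}(Y^\prime)$; so this step only shows that $u_\dagger\circ u^\dagger$ is homotopy equivalent to $T_f\colon f_h^{-1}(Y^\prime)\to f_h^{-1}(Y^\prime)$.

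The one extra ingredient is therefore to bridge $T_f$ and the identity, and this is exactly the content of Lemma \ref{lemmae1}, which asserts that $T_f\colon f_h^{-1}(Y^\prime)\to f_h^{-1}(Y^\prime)$ is homotopy equivalent to the identity functor. Since the homotopy relation between functors is transitive (it is transitivity of homotopy of the induced maps on classifying spaces, $B$ being functorial), composing the two homotopies yields that $u_\dagger\circ u^\dagger$ is homotopy equivalent to $\id_{f_h^{-1}(Y^\prime)}$. Putting the two parts together, both $u^\dagger\circ u_\dagger$ and $u_\dagger\circ u^\dagger$ are homotopy equivalent to the respective identity functors, so by Definition \ref{defhq} the functors $u_\dagger$ and $u^\dagger$ are mutually inverse homotopy equivalences. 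There is no genuine obstacle at this stage, the whole difficulty having been front-loaded into the explicit constructions of Lemmas \ref{lemmae1} and \ref{lemmae2}; the only subtlety worth flagging is that the detour through $T_f$ is unavoidable, since $\theta_2$ produces a homotopy to $T_f$ rather than directly to the identity.
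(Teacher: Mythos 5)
Your proof is correct and takes essentially the same route as the paper: the paper's own proof consists of the single remark that the corollary ``follows from Lemma \ref{lemmae1} and Lemma \ref{lemmae2}'', and your argument is precisely the unpacking of that citation --- $\theta_1$ gives $u^\dagger\circ u_\dagger\simeq\id$ via Proposition \ref{propbasic}(i), while $\theta_2$ together with the restriction of $T_f\simeq\id$ to $f_h^{-1}(Y^\prime)$ from Lemma \ref{lemmae1} and transitivity gives $u_\dagger\circ u^\dagger\simeq\id$. Your flagging of the detour through $T_f$ as the one non-trivial point is exactly the reason the paper proves the fiber-preserving statement in Lemma \ref{lemmae1}.
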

It follows from Lemma \ref{lemmae1} and Lemma \ref{lemmae2}.

\qed

\subsection{\sc }
Let $u\colon Y\to Y^\prime$ be a morphism in $\mathscr{D}$. Consider the following diagram:
\begin{equation}
\xymatrix{
f_h^{-1}(Y)\ar[r]^{i_Y}\ar[d]_{u_\dagger}&f_h\setminus Y\ar[d]^{u_*}\\
f_h^{-1}(Y^\prime)\ar[r]^{i_{Y^\prime}}&f_h\setminus Y^\prime
}
\end{equation}
One sees immediately that it commutes.

We know that the left-hand side vertical arrow $u_\dagger$ is a homotopy equivalence, see Corollary \ref{corolle1}.
Therefore, to prove that $u_*$ is a homotopy equivalence, it is enough to prove that the horisontal maps $i_Y$ and $i_{Y^\prime}$ also are.

For this end, define a functor $\ell_Y\colon f_h\setminus Y\to f_{h}^{-1}(Y)$ as follows.
It sends $(X,[n],\underline{Y},s)$ where $s\colon \bar{Y}_n\to Y$ a morphism in $\mathscr{D}$, to
$(X,[n+1],\underline{Z})$, where $\underline{Z}$ is given as
\begin{equation}
\dots\leftarrow Y_n\rightarrow\bar{Y}_{n-1}\leftarrow Y_n \xrightarrow{\alpha} \bar{Y}_n\xleftarrow{\id}\bar{Y}_n\xrightarrow{s}Y
\end{equation}
and it is defined on the morphisms accordingly.

\begin{lemma}
Let $Y\in\mathscr{D}$. Then the composition $\ell_Y\circ i_Y=T_f$, where $T_f$ is the functor homotopy equivalent to the identity, see Lemma \ref{lemmae1}, and there is a natural transformation $\omega:\id_{f_h\setminus Y}\to i_Y\circ \ell_Y$.
\end{lemma}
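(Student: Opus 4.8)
The plan is to verify the two assertions separately; I note at the outset that, combined with Lemma \ref{lemmae1} (which gives $T_f\simeq\id$) and Proposition \ref{propbasic}(i), they will exhibit $\ell_Y$ as a homotopy inverse to $i_Y$, which is exactly the use the surrounding argument of this subsection makes of the Lemma (to deduce that $i_Y$ and $i_{Y^\prime}$, hence $u_*$, are homotopy equivalences).

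For the equality $\ell_Y\circ i_Y=T_f$ I would simply substitute $s=\id_Y$. An object of $f_h^{-1}(Y)$ is a triple $(X,[n],\underline Y)$ with $\bar Y_n=Y$, and $i_Y$ sends it to $(X,[n],\underline Y,\id_Y)$. Feeding this into the definition of $\ell_Y$, the block appended to $\underline Y$ is $\bar Y_n\xleftarrow{\id}\bar Y_n\xrightarrow{\id}Y$, which, since $\bar Y_n=Y$ and $s=\id_Y$, is precisely the block $\bar Y_n\xleftarrow{\id}\bar Y_n\xrightarrow{\id}\bar Y_n$ appended by the shift functor $T_n$. Hence $\ell_Y(i_Y(X,[n],\underline Y))=(X,[n+1],T_n(\underline Y))=T_f(X,[n],\underline Y)$, and the same comparison on morphisms gives the equality of functors.

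For the natural transformation $\omega$ I would proceed objectwise. Given $(X,[n],\underline Y,s)$ with $s\colon\bar Y_n\to Y$, its image under $i_Y\circ\ell_Y$ is $(X,[n+1],\underline Z,\id_Y)$, where $\underline Z$ ends in $\dots\xrightarrow{\alpha}\bar Y_n\xleftarrow{\id}\bar Y_n\xrightarrow{s}Y$. I define $\omega(X,[n],\underline Y,s)$ to be the morphism of $\mathscr{H}(f)$ with $\mathscr{C}$-component $\id_X$, with $\Delta_\str$-component the standard inclusion $\theta_n\colon[n]\to[n+1]$, $\theta_n(i)=i$, and with path-component $\Lambda(\theta_n)(\underline Y)=T_n(\underline Y)\to\underline Z$ given by the identity on every term except the last, where it is $s$. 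The single nontrivial square to check is the terminal block $\bar Y_n\xleftarrow{\id}\bar Y_n\xrightarrow{\id}\bar Y_n$ mapping to $\bar Y_n\xleftarrow{\id}\bar Y_n\xrightarrow{s}Y$ via $(\id,\id,s)$, which commutes at once. I would then confirm that $\omega$ genuinely lands in $f_h\setminus Y$: the required identity for a morphism from $(X,[n],\underline Y,s)$ to $(X,[n+1],\underline Z,\id_Y)$ is $\id_Y\circ f_h(\omega)=s$, and computing $p_1$ of the chosen $\Lambda\mathscr{D}$-morphism through the Grothendieck construction gives $f_h(\omega)=s\circ\id_{\bar Y_n}=s$.

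Finally, naturality of $\omega$ is a routine diagram chase: for a morphism $g$ of $f_h\setminus Y$, both ways around the naturality square agree componentwise (on the $\mathscr{C}$-, $\Delta_\str$-, and zig-zag factors), since on each factor they reduce to the corresponding component of $g$ composed with the insertion of the trivial final block and of $s$. The only steps needing genuine attention are the bookkeeping of the arrow directions in the terminal block and the verification $f_h(\omega)=s$ — that is, that $p_1$ acts on the chosen morphism by its terminal component composed with the identity structure map supplied by $T_n$; everything else is formal.
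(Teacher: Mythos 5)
Your proof is correct and takes essentially the same route as the paper's: the equality $\ell_Y\circ i_Y=T_f$ by substituting $s=\id_Y$, and the transformation $\omega$ defined componentwise by $\id_X$, the inclusion $\theta_n\colon[n]\to[n+1]$, and the map $\Lambda(\theta_n)(\underline{Y})=T_n(\underline{Y})\to\underline{Z}$ which is the identity in every term except the last, where it is $s$ --- this is exactly the commutative diagram the paper exhibits. Your extra verifications (that $f_h(\omega)=s$, so the morphism indeed lies in $f_h\setminus Y$, and the naturality check) are precisely what the paper's diagram, via its commuting triangle over $Y$, encodes implicitly.
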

\begin{proof}
The first claim is straightforward. For the second, consider the commutative diagram
\begin{equation}
\xymatrix{
\dots &Y_n\ar[l]\ar[r]^{\alpha}\ar[dd]^{\id}&\bar{Y}_n\ar[dd]^{\id}&\bar{Y}_n\ar[l]_{\id}\ar[r]^{\id}\ar[dd]^{\id}&\bar{Y}_n\ar[dd]^{s}\ar[rd]^{s}\\
&&&&&Y\\
\dots&Y_n\ar[l]\ar[r]^{\alpha}&\bar{Y}_n&\bar{Y}_n\ar[l]_{\id}\ar[r]^s&Y\ar[ur]_{\id}
}
\end{equation}
\end{proof}

\begin{coroll}
The functors $i_Y$ and $\ell_Y$ are homotopy equivalences, homotopy inverse to each other.
\end{coroll}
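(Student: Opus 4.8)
The plan is to assemble the two statements furnished by the immediately preceding Lemma, using the definition of homotopy equivalence (Definition \ref{defhq}) together with Proposition \ref{propbasic}(i). The two composites $\ell_Y\circ i_Y$ and $i_Y\circ\ell_Y$ must each be shown homotopy equivalent to the respective identity functors, and then the conclusion is immediate.

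First I would treat the composite $\ell_Y\circ i_Y\colon f_h^{-1}(Y)\to f_h^{-1}(Y)$. The Lemma gives the exact equality $\ell_Y\circ i_Y=T_f$, where $T_f$ is the shift functor of Lemma \ref{lemmae1}. By Lemma \ref{lemmae1} the functor $T_f$ is homotopy equivalent to $\id_{f_h^{-1}(Y)}$, so passing to classifying spaces we obtain that $B(\ell_Y\circ i_Y)=B(T_f)$ is homotopic to the identity of $B(f_h^{-1}(Y))$. Next I would treat the composite $i_Y\circ\ell_Y\colon f_h\setminus Y\to f_h\setminus Y$. Here the Lemma provides the natural transformation $\omega\colon\id_{f_h\setminus Y}\to i_Y\circ\ell_Y$, and Proposition \ref{propbasic}(i) says that any such natural transformation induces a homotopy between the corresponding maps of classifying spaces; hence $B(i_Y\circ\ell_Y)$ is homotopic to the identity of $B(f_h\setminus Y)$.

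Combining the two paragraphs, both $B(\ell_Y\circ i_Y)$ and $B(i_Y\circ\ell_Y)$ are homotopic to the relevant identity maps. This is precisely the condition of Definition \ref{defhq} asserting that $i_Y$ is a homotopy equivalence with homotopy inverse $\ell_Y$ (and symmetrically that $\ell_Y$ is a homotopy equivalence with homotopy inverse $i_Y$), which is the claim of the Corollary.

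I do not expect any genuine obstacle here: all the real content was already discharged in the preceding Lemma (the verification that $\ell_Y\circ i_Y=T_f$ on the nose and the construction of the natural transformation $\omega$) and in Lemma \ref{lemmae1} (the homotopy $T_f\simeq\id$). The only points requiring a moment of care are purely formal: that one composite is handled via an equality followed by Lemma \ref{lemmae1}, whereas the other is handled directly via Proposition \ref{propbasic}(i), and that the two resulting homotopies together match the two-sided condition of Definition \ref{defhq}.
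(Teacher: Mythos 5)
Your proposal is correct and is exactly the argument the paper intends (the paper leaves the Corollary as an immediate consequence of the preceding Lemma): the equality $\ell_Y\circ i_Y=T_f$ combined with Lemma \ref{lemmae1} handles one composite, and the natural transformation $\omega$ combined with Proposition \ref{propbasic}(i) handles the other, which together give the two-sided condition of Definition \ref{defhq}. You also correctly invoke the restricted statement of Lemma \ref{lemmae1} on $f_h^{-1}(Y)$, which is the version needed here.
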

\qed

Theorem \ref{theoreme} is proven.

\qed

\endcomment

\comment
\section{\sc A variation on Evrard's result}
Here we prove a version of Theorem \ref{theoreme}, using slightly different construction of the free paths category of a category $\mathscr{D}$, which we denote here $\Lambda^\prime\mathscr{D}$. The idea is that the category $\Delta_\str$ (see Section 2.1) contains too much morphisms.
The observation is that the morphisms we really used in the course of proof of Theorem \ref{theoreme} are those which form the following subcategory of the category $\Delta_\str$.

Define the category $\Delta_{\le}$ having the objects $\{[1],[2],[3],\dots\}$, and the only morphism from $[m]$ to $[n]$ if $m\le n$, and the empty set of morphisms otherwise.
We interpret this morphism as a morphism $f$ in $\Delta_\str$ from  $1<2<\dots<m$ to $1<2<\dots<n$ such that $f(i)=i$ ($m\le n$).
It gives an imbedding of categories $\Delta_\le\to\Delta_\str$.

Recall the categories $\Lambda_n\mathscr{D}$, defined for a small category $\mathscr{D}$ and for $n\ge 1$.
The composition of the functor $\Delta_\le\to\Delta_\str$ with the functor
$\Lambda\colon \Delta_\str\to\Cat$, sending $[n]$ to $\Lambda_n\mathscr{D}$, and a morphism $\phi\colon[m]\to [n]$ to the functor $\Lambda(\phi)$ as in \eqref{lphi}, gives a functor
$$
\Lambda^\prime\colon \Delta_\le\to\Cat
$$
We set
\begin{equation}
\Lambda^\prime\mathscr{D}=\Delta_\le\int\Lambda^\prime
\end{equation}
This category $\Lambda^\prime\mathscr{D}$ is another candidate for the ``free paths category'' of $\mathscr{D}$.
There are two functors $p_0,p_1\colon\Lambda^\prime \mathscr{D}\to\mathscr{D}$, the start point and the end point of the path functors, as in the case of $\Lambda\mathscr{D}$, see Section 2.1.

Then we define, for a functor $f\colon\mathscr{C}\to\mathscr{D}$, the category $\mathscr{H}^\prime(f)$ from the cartesian diagram:
\begin{equation}
\xymatrix{
\mathscr{H}^\prime(f)\ar[r]\ar[d]&\Lambda^\prime\mathscr{D}\ar[d]^{p_0}\\
\mathscr{C}\ar[r]^{f}&\mathscr{D}
}
\end{equation}
There is a functor $f^\prime_h\colon\mathscr{H}^\prime(f)\to\mathscr{D}$ given by $(X,\underline{Y})\mapsto p_1(Y)$.

We have a direct analogue of Proposition \ref{prope3}:
\begin{prop}\label{prope4}
The functor $q\colon\mathscr{H}^\prime(f)\to\mathscr{C}$, assigning $X\in \mathscr{C}$ to $(X,\underline{Y})$, is a homotopy equivalence.
\end{prop}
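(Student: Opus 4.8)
The plan is to imitate the proof of Proposition \ref{prope3}, verifying at each step that the only morphisms of $\Delta_\le$ that intervene are the canonical inclusions $[m]\to[n]$ (indeed, mostly the identities $[n]\to[n]$), so that nothing breaks when $\Delta_\str$ is replaced by the much smaller $\Delta_\le$. First I would produce a candidate homotopy inverse, namely the functor $i\colon\mathscr{C}\to\mathscr{H}^\prime(f)$ sending $X$ to $(X,[1],\underline{f(X)})$, where $\underline{f(X)}$ is the constant length-one zig-zag $f(X)\to f(X)\leftarrow f(X)$ with all arrows identities. This is a legitimate object of $\Lambda^\prime_1\mathscr{D}=\Lambda_1\mathscr{D}$ with $p_0=f(X)$, so $i$ lands in the pull-back $\mathscr{H}^\prime(f)$, and $q\circ i=\id_{\mathscr{C}}$ is immediate. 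It then remains to show $i\circ q\simeq\id_{\mathscr{H}^\prime(f)}$; since $i\circ q$ sends $(X,[n],\underline{Y})$ to $(X,[1],\underline{f(X)})$, this is the whole content.

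I would factor this homotopy into two steps through the auxiliary functor $P_0\colon\mathscr{H}^\prime(f)\to\mathscr{H}^\prime(f)$, $(X,[n],\underline{Y})\mapsto(X,[n],\underline{c})$, where $\underline{c}$ is the constant length-$n$ zig-zag at $\bar{Y}_0=f(X)$. For the first step, $\id\simeq P_0$, I would transport the Evrard homotopy built in the proof of Proposition \ref{prope2} fiberwise over $X$: that homotopy is a zig-zag of natural transformations between functors $\Lambda_n\mathscr{D}\to\Lambda_n\mathscr{D}$ which keep the length $n$ fixed and leave the starting object $\bar{Y}_0$ untouched. Consequently its lift to $\mathscr{H}^\prime(f)$ consists of morphisms living over $\id_X$ and over the identity $[n]\to[n]$ of $\Delta_\le$, so it is a valid Evrard homotopy in $\mathscr{H}^\prime(f)$; combined with Proposition \ref{propbasic}(i) it yields $B(\id)\simeq B(P_0)$.

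For the second step, $P_0\simeq i\circ q$, I would exhibit a single natural transformation $\eta\colon i\circ q\to P_0$ whose component at $(X,[n],\underline{Y})$ is the morphism of $\mathscr{H}^\prime(f)$ given, in the notation of the Grothendieck construction of Section \ref{gc}, by the pair $(k,\id)$ with $k\colon[1]\to[n]$ the unique inclusion of $\Delta_\le$. Here I use that $\Lambda^\prime(k)$ carries the constant length-one path to the constant length-$n$ path, which is read off directly from \eqref{lphi} since $\Lambda(\phi)$ of a constant zig-zag is again constant; this is exactly what lets the second component be taken to be the identity. Naturality of $\eta$ against an arbitrary morphism of $\mathscr{H}^\prime(f)$ is a routine check using the composition law of the Grothendieck construction. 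Applying Proposition \ref{propbasic}(i) once more gives $B(P_0)\simeq B(i\circ q)$, and concatenating the two homotopies yields $\id\simeq i\circ q$; together with $q\circ i=\id$ this proves that $q$ is a homotopy equivalence.

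The point that requires genuine attention---and the reason this is recorded as a separate proposition rather than as a verbatim copy of Proposition \ref{prope3}---is precisely the bookkeeping of which $\Delta$-morphisms occur: the contraction of Proposition \ref{prope2} must be performed at fixed length, so that only identities of $\Delta_\le$ are needed, and the sole genuine change of length, from $[1]$ to $[n]$, must be realizable by the unique inclusion available in $\Delta_\le$. I expect this compatibility check, rather than any new homotopy-theoretic input, to be the main obstacle; everything else is inherited from the arguments already given for $\Lambda\mathscr{D}$ and $\mathscr{H}(f)$.
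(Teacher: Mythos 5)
Your proposal is correct and takes essentially the same approach as the paper: the paper's entire proof is the remark that the argument for Proposition \ref{prope3} carries over verbatim, i.e.\ $q\circ i=\id$ together with the contraction of paths onto their start point from Proposition \ref{prope2}, which is exactly your two-step homotopy $\id\simeq P_0\simeq i\circ q$. Your explicit bookkeeping --- that the fiberwise contraction uses only identities of $\Delta_\le$ and that the single change of length is realized by the unique morphism $[1]\to[n]$ via the natural transformation $\eta$ --- merely spells out what the paper leaves implicit in the phrase ``the proof is literally the same''.
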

The proof is literally the same.

\qed

The main result in this Section is:
\begin{theorem}\label{theoremebis}
Let $f\colon \mathscr{C}\to\mathscr{D}$ be a functor between small categories, and let $f^\prime_h\colon \mathscr{H}^\prime(f)\to\mathscr{D}$ be the functor constructed above. Then the functor $f^\prime_h\colon\mathscr{H}^\prime(f)\to\mathscr{D}$ fulfills the assumptions of Quillen Theorem B.
\end{theorem}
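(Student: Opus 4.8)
The plan is to run the proof of Theorem \ref{theoreme} verbatim for $\mathscr{H}^\prime(f)$. The guiding observation, already made at the start of this Section, is that every functor and every natural transformation produced in Section 3 refers only to the morphisms $\phi\colon[m]\to[n]$ of $\Delta_\str$ with $\phi(i)=i$, and these are exactly the morphisms of $\Delta_\le$. Since $\Delta_\le\hookrightarrow\Delta_\str$ is a wide subcategory (same objects, fewer morphisms), the induced $\mathscr{H}^\prime(f)$ is a wide subcategory of $\mathscr{H}(f)$ with the same objects, and likewise for the fibers $(f^\prime_h)^{-1}(Y)$ and the comma categories $f^\prime_h\setminus Y$. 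The whole chain of homotopy equivalences from Section 3 should therefore reproduce inside these smaller categories.

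Concretely, I would first reconstruct the shift functor $T^\prime_f\colon\mathscr{H}^\prime(f)\to\mathscr{H}^\prime(f)$ by the formula for $T_n$, and redo Lemma \ref{lemmae1}: the natural transformation $\id\to T^\prime_f$ has $\Delta$-component $\theta_n(i)=i\in\Delta_\le$, so it is a genuine morphism of $\mathscr{H}^\prime(f)$ and gives a homotopy to the identity by Proposition \ref{propbasic}(i). Next I would define $u_\dagger$ and $u^\dagger$ on the fibers $(f^\prime_h)^{-1}(Y)$ by the same formulas; the transformations $\theta_1,\theta_2$ of Lemma \ref{lemmae2} have $\Delta$-components equal to the standard inclusion $\phi(i)=i$ and to the identity, both in $\Delta_\le$, so Corollary \ref{corolle1} holds unchanged and $u_\dagger,u^\dagger$ are mutually inverse homotopy equivalences.

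Then, for $u\colon Y\to Y^\prime$ in $\mathscr{D}$, I would form the same commutative square relating $(f^\prime_h)^{-1}$, $f^\prime_h\setminus(-)$, $u_\dagger$ and $u_*$, and define $\ell_Y\colon f^\prime_h\setminus Y\to(f^\prime_h)^{-1}(Y)$ by the same rule. The identity $\ell_Y\circ i_Y=T^\prime_f$ and the natural transformation $\omega\colon\id\to i_Y\circ\ell_Y$ survive intact, since $\ell_Y$ merely lengthens the zig-zag at the top index and $\omega$ uses only standard inclusions; hence $i_Y$ and $i_{Y^\prime}$ are homotopy equivalences. Together with the fact that $u_\dagger$ is a homotopy equivalence, this forces $u_*=[v_*]\colon f^\prime_h\setminus Y\to f^\prime_h\setminus Y^\prime$ to be a homotopy equivalence, which is precisely the hypothesis of the dual form of Quillen Theorem B (Theorem \ref{qtheoremb}).

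The one point that genuinely requires verification—and which I expect to be the main, though modest, obstacle—is that shrinking $\Delta_\str$ to $\Delta_\le$ destroys none of these equivalences. This rests on two checks: (a) each of $T^\prime_f,u_\dagger,u^\dagger,i_Y,\ell_Y$ sends a $\Delta_\le$-morphism to a $\Delta_\le$-morphism, because it either preserves the $\Delta$-component or extends it by sending the new top index to the new top index, operations under which $\Delta_\le$ is closed; and (b) every natural transformation above has all of its components given by $\Delta_\le$-morphisms, so each restricts to a natural transformation of the corresponding functors on the wide subcategories, its naturality squares forming a subset of those already checked in Section 3. Granting (a) and (b), Proposition \ref{propbasic}(i) delivers the required homotopies and the proof is complete.
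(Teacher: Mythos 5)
Your overall strategy is exactly the paper's own: the published proof of Theorem \ref{theoremebis} is little more than a remark that the argument of Section 3 can be repeated over $\Delta_\le$, the key point being that the shift functors $T_n$ still make sense there. However, your resolution of the one point you yourself single out as ``the main obstacle'' --- your check (a) --- is wrong, and the error sits precisely where the content of the theorem lies. The operation ``extend $\phi\colon[n]\to[m]$ to $[n+1]\to[m+1]$ by sending the new top index to the new top index'' does \emph{not} preserve $\Delta_\le$: if $\phi$ is the standard inclusion with $n<m$, this extension sends $n+1\mapsto m+1\neq n+1$, which is not the unique $\Delta_\le$-morphism $[n+1]\to[m+1]$. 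Consequently the length-increasing functors $T^\prime_f$, $u^\dagger$, $\ell_Y$, defined on morphisms as you describe, do not land in $\mathscr{H}^\prime(f)$ at all. Nor can this be papered over by your first paragraph's claim that the Section 3 functors simply ``restrict'' to the wide subcategory $\mathscr{H}^\prime(f)\subset\mathscr{H}(f)$: over $\Delta_\str$, naturality of $\theta\colon\id\to T_f$ forces $T_f(g)$ to have $\Delta$-component extending $\phi$, and the only functorial such extension is $n+1\mapsto m+1$ (the assignment $n+1\mapsto n+1$ is not even strictly monotonous unless $\phi$ is the standard inclusion, and it is incompatible with composition against morphisms outside $\Delta_\le$). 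So the Section 3 functors genuinely fail to preserve the subcategory, and the argument breaks at each of the three lengthening functors.

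The repair --- which is what the paper implicitly intends by ``the proof can be repeated'' --- is to \emph{redefine}, not restrict, the lengthening functors over $\Delta_\le$: the image of a morphism whose $\Delta$-component is the standard inclusion $[n]\to[m]$ is declared to have $\Delta$-component the standard inclusion $[n+1]\to[m+1]$, and the zig-zag components at the padded positions $n+1,\dots,m+1$ are obtained by composing the original padding components with the relevant structure map (with $u$ for $u^\dagger$, with $s$ for $\ell_Y$, and with the last component of the given morphism for $T^\prime_f$). One must then recheck functoriality and the naturality of $\theta$, $\theta_1$, $\theta_2$, $\omega$ from Lemmas \ref{lemmae1} and \ref{lemmae2} for these modified functors; this goes through, and goes through precisely because in $\mathscr{H}^\prime(f)$ \emph{every} $\Delta$-component is a standard inclusion, so the $\Delta$-level of every naturality square commutes automatically. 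With that modification your outline becomes a complete proof along the paper's lines; as written, claim (a) is false and the functors you build are not well defined.
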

\begin{proof}
The only thing we should mention in addition to the proof of Theorem \ref{theoreme}, is that our main ``tool'' in the proof of Theorem \ref{theoreme} is the shift functors $T_n\colon \Lambda_n\mathscr{D}\to \Lambda_{n+1}\mathscr{D}$, introduced in Section 3.1, and the corresponding functor $T_f\colon
\mathscr{H}(f)\to\mathscr{H}(f)$ constructed out of them. That is, we can replace the category $\Delta_\str$ by any subcategory (containing all its objects), where the the functors $T_n$ can be defined. It is clear that the category $\Delta_\le$ is the minimal among such categories. Then the proof can be repeated straightforwardly.
\end{proof}
\endcomment

\bigskip

{\small
\noindent {\sc Universiteit Antwerpen, Campus Middelheim, Wiskunde en Informatica, Gebouw G\\
Middelheimlaan 1, 2020 Antwerpen, Belgi\"{e}}}

\bigskip

\noindent{{\it e-mail}: {\tt Boris.Shoikhet@uantwerpen.be}}

\end{document}